\definecolor{mygray}{gray}{0.6}
\theoremstyle{plain}
\newtheorem{theorem}{Theorem}[section]
\newtheorem{proposition}[theorem]{Proposition}
\newtheorem{lemma}[theorem]{Lemma}
\newtheorem{corollary}[theorem]{Corollary}
\newtheorem{definition}[theorem]{Definition}
\theoremstyle{definition}
\theoremstyle{remark}
\newtheorem{remark}[theorem]{Remark}
\newcommand{\N}{\ensuremath{\mathbb{N}}}
\newcommand{\Z}{\ensuremath{\mathbb{Z}}}
\newcommand{\be}{\begin{equation}}
\newcommand{\ee}{\end{equation}}
\newcommand{\e}{\mathrm e}
\newcommand{\De}{\mathrm d}
\renewcommand{\c}{\mathrm c}
\renewcommand{\c}{\mathrm{c}}
\newcommand{\eq}[1]{\begin{equation#1}}
\newcommand{\eeq}[1]{\end{equation#1}}
\numberwithin{equation}{section}
\def\paragraph{\@startsection{paragraph}{4}%
  \z@\z@{-\fontdimen2\font}%
  {\normalfont\bfseries}}
\title[Exponential decay for the pinned  membrane model]{Exponential decay of covariances for the supercritical membrane model}
\author[E.Bolthausen]{Erwin Bolthausen}
\address{Institut f\"ur Mathematik, Universit\"at Z\"urich, Winterthurerstrasse 190, CH-8057, Zurich, Switzerland}
\email{eb@math.uzh.ch}
\author[A. Cipriani]{Alessandra Cipriani}
\address{Department of Mathematical Sciences, University of Bath, Claverton Down, Bath, BA2 7AY, United Kingdom.}
\email{A.Cipriani@bath.ac.uk}
\author[N. Kurt]{Noemi Kurt}
\address{Technische Universit\"at Berlin,
MA 766, Strasse des 17. Juni 136, 10623
Berlin, Germany}
\email{kurt@math.tu-berlin.de}
\date{\today}
\begin{document}

\date{}
\begin{abstract}
We consider the membrane model, that is the centered Gaussian field on $\Z^d$ whose covariance matrix is given by the inverse of the discrete Bilaplacian. We impose a $\delta-$pinning condition, giving a reward of strength $\varepsilon$ for the field to be $0$ at any site of the lattice. In this paper we prove that in dimensions $d\geq 5$ covariances of the pinned field decay at least exponentially, as opposed to the field without pinning, where the decay is polynomial. The proof is based on estimates for certain discrete weighted norms, a percolation argument and on a Bernoulli domination result.
\end{abstract}
\keywords{Membrane model; pinning; Bilaplacian; decay of covariances.}
\subjclass[2000]{Primary 60K35; secondary 31B30, 39A12, 60K37, 82B41.}
\maketitle
\section{Introduction}

Effective interface models are well-studied real-valued random fields, defined
for instance on the lattice $\mathbb{Z}^{d}$, which predict the behavior of
polymers and interfaces between two states of matter. The best known examples
are the gradient models $\varphi=\left\{  \varphi_{x}\right\}
_{x\in\mathbb{Z}^{d}}$ which (in formal notation) are of the form%
\[
P\left(  \De \varphi\right)  :=\frac{1}{Z}\exp\left[  -H\left(  \varphi\right)
\right]  \prod_{x}\De\varphi_{x},
\]
with the Hamitonian%
\[
H\left(  \varphi\right)  :=\sum_{x,\,y\in\mathbb{Z}^{d},\ \left\Vert
x-y\right\Vert =1}V\left(  \varphi_{x}-\varphi_{y}\right)
\]
where $V:\mathbb{R\rightarrow R}$ is the interaction function, satisfying
$V\left(  x\right)  \rightarrow\infty$ for $\left\Vert x\right\Vert
\rightarrow+\infty$. The measure has to be defined through a thermodynamic
limit. In the case $V\left(  x\right) : =\beta x^{2},$ the model is Gaussian,
but it is defined on the whole of $\mathbb{Z}^{d}$ only for $d\geq3.$ For
lower dimensions, one has to restrict $x$ to a finite set, and put boundary
conditions. This is the so-called Gaussian free field which has attracted tremendous
attention recently for $d=2.$ One simplifying feature of the free field is
that the covariances of the model are given in terms of the Green's function
of a standard random walk on the lattice, and many properties of the field can
be derived from properties of the random walk. This has led to powerful
techniques for analysing the model. The case where $V$ is not quadratic is
much more complicated. If $V$ is convex, there is still a random walk
representation of the correlation, the Helffer-Sj\"{o}strand representation,
but in the case of non-convex $V$, random walk techniques cannot be applied,
and many of the very basic questions are still open. For a recent
investigation, see \cite{Adams, AKM}.

The so-called massive free field has the Hamiltonian%
\[
H\left(  \varphi\right)  :=\beta\sum_{x,\,y,\ \left\Vert x-y\right\Vert
=1}\left(  \varphi_{x}-\varphi_{y}\right)  ^{2}+m\sum_{x}\varphi_{x}%
^{2},\quad \beta,\,m>0,
\]
and it is a Gaussian field which
is well-defined on the full lattice in any dimension, and has exponentially
decaying covariances. This just comes from the fact that the covariances are
given by the Green's function of a random walk with a positive killing rate \cite[Theorem~8.46]{veleniknotes}.

It is quite astonishing that an exponential decay of correlations, in physics
jargon a positive mass, also appears when the free field Hamiltonian is
perturbed by an arbitrary small attraction to the origin, for instance in the
form%
\begin{equation}
H\left(  \varphi\right)  :=\beta\sum_{x,\,y,\ \left\Vert x-y\right\Vert
=1}\left(  \varphi_{x}-\varphi_{y}\right)  ^{2}+a\sum_{x}\mathbf{1}_{\left[
-b,b\right]  }\left(  \varphi_{x}\right)  \label{window_pinning}%
\end{equation}
with $a,\,b>0$ (see \citet[Section 5]{velenikloc}). A somewhat simpler case is that of so-called $\delta
$-pinning where the reference measure $\prod_{x}\De\varphi_{x}$ is replaced by
$\prod_{x}\left(  \De\varphi_{x}+\varepsilon\delta_{0}\left(  \De\varphi
_{x}\right)  \right)  ,$ and which can be obtained from (\ref{window_pinning})
by a suitable limiting procedure letting $b\rightarrow0,\ a\rightarrow+\infty.$ All the
proofs we are aware of rely heavily on random walk representations.

Our main object here is to discuss similar properties for the $\delta$-pinned
membrane model which has the Hamiltonian%
\[
H\left(  \varphi\right)  :=\frac{1}{2}\sum_{x\in \Z^d}\left(  \Delta\varphi_{x}\right)  ^{2}%
\]
where $\Delta$ is the discrete Laplace operator on functions $f:\mathbb{Z}%
^{d}\rightarrow\mathbb{R}$, defined by%
\eq{}\label{def:lapl}
\Delta f\left(  x\right)  :=\frac{1}{2d}\sum_{y:\,\left\Vert y-x\right\Vert
=1}\left(  f\left(  y\right)  -f\left(  x\right)  \right)  .
\eeq{}
We leave out the temperature parameter $\beta$ as it just leads to a trivial
rescaling of the field.

While the free field (described for example in \citet[Chapter 8]{veleniknotes}) is used to model polymers or
interfaces with a tendency to maintain a constant mean height, the the
membrane model appears in physical and biological research to shape interfaces
that tend to have constant curvature \citep{Hiergeist,Leibler,Lipowsky,Ruiz}. In solid state physics one often
considers models with mixed gradient and Laplacian Hamiltonian, but we will
not discuss such cases here. The two models share many common characteristics,
for instance their variances are uniformly bounded in $\mathbb{Z}^{d}$ if the
dimension is large enough, that is $d\geq3$ for the gradient case resp.
$d\geq5$ for the membrane model, and have variances growing logarithmically in
$d=2$ resp. $d=4$.

The main topic of the present paper is an investigation of the decay of
correlations for the membrane model in dimensions $d\geq5.$ We restrict to the
case of $\delta$-pinning for technical reasons. We prove that the field
becomes \textquotedblleft massive\textquotedblright, i.e. has exponentially
decaying correlation for any positive pinning parameter.

The main difficulty when compared with the proofs of similar results for the
free field is the absence of useful random walk representations for the
covariances and correlation inequalities. Random walk representations for
gradient fields have been very important since the celebrated work \cite{BFS}.
There is a variant of a random walk representation in the case of the membrane
model \cite[Section 2]{Kurt_d4}, but only in the presence of particular boundary conditions, or in the
case of the field on the whole lattice in the absence of boundary conditions. Results on the membrane model with pinning were shown in $(1+1)$ dimensions by \cite{CaravennaDeuschel_pin} using a renewal type of argument which, however, is not applicable in higher dimensions. We would like to mention also the work \cite{AKW} on large deviation principles under a Laplacian interaction without using renewal type arguments.

\noindent\textbf{Structure of the paper.} The structure of the paper is as
follows: in Section \ref{sec:model} we give precise definitions on the
membrane model and the statement of our main theorem. We recall general
results, including Bernoulli domination, in Section~\ref{sec:general}. In
Section~\ref{sec:main} we prove our main theorem.

\noindent\textbf{Acknowledgements.} We are grateful to Vladimir Maz'ya who
gave a significant input to the present work by showing how to prove the
exponential decay for the Bilaplacian in the continuous space with a
sufficiently dense set of deterministic \textquotedblleft
traps\textquotedblright\ and appropriate boundary conditions. For more information on the analytic background the reader can consult \citet{Mazya}.

This work was performed in part during visits of the first author to the TU
Berlin and WIAS Berlin, and of the last two authors to the University of
Zurich. We thank these institutions for their hospitality. Francesco
Caravenna, Jean-Dominique Deuschel and Rajat Subhra Hazra are acknowledged for
feedback and helpful discussions.

\section{The model and main results\label{sec:model}}

\subsection{Basic notations}

We will work on the $d$-dimensional integer lattice $\mathbb{Z}^{d}$, and in
the present paper our focus will be in $d\geq5$, although the basic definition
is well-posed in all dimensions. Also, some of the partial results which don't
rely on the dimension restriction will be stated and proved in generality.

For $N\in\mathbb{N},$ let $V_{N}:=[-\nicefrac{N}{2},\,\nicefrac{N}{2}]^{d}\cap\mathbb{Z}^{d}$ and
$V_{N}^{\mathrm{c}}:=\mathbb{Z}^{d}\setminus V_{N}$.

For $x,\,y\in\mathbb{Z}^{d},$ \textrm{d}$\left(  x,\,y\right)  $ is the graph
distance between $x$ and $y$ on the lattice with nearest-neighbor bonds, i.e.
the $\ell_{1}$-norm of $x-y.$ With $\left\Vert \cdot\right\Vert ,$ we denote the
Euclidean norm.

We will use $L$ as a generic positive constant which depends only on the
dimension $d$, not necessarily the same at different occurencies, and also not
necessarily the same within the same formula. The dependence on $d$ will not
be mentioned, but dependence on other parameters will be noted by writing
$L\left(  k\right)  $ or $L\left(  \varepsilon\right)  $, for instance.

We will consider real valued random fields $\left\{  \varphi_{x}\right\}
_{x\in\mathbb{Z}^{d}}.$ For $A\subset\mathbb{Z}^{d},$ we write $\mathcal{F}%
_{A}$ for the $\sigma$-field generated by the random variables $\left\{\varphi
_{x},\,x\in A\right\}.$ To be definite, we can of course have all the measures
constructed on $\mathbb{R}^{\mathbb{Z}^{d}},$ equipped with the product
$\sigma$-field.

We will typically use $x,\,y$ for points in $\mathbb{Z}^{d}.$ If we write
$\sum_{x}$, this means summation over all $\mathbb{Z}^{d}.$ We will use $e$
exclusively for the $2d$ elements of $\mathbb{Z}^{d}$ which are neighbors of
$0.$ To keep notations less heavy, $\sum_{e}$ means that we sum over all these elements, and similarly for other discrete differential operators we will introduce. For a function $f$
on $\mathbb{Z}^{d}$, we write%
\[
D_{e}f\left(  x\right)  :=f\left(  x+e\right)  -f\left(  x\right)  .
\]
We write $\nabla f$ for the vector $\left(  D_{e}f\right)  _{e},$ and
$\nabla^{2}f$ for the matrix $\left(  D_{e}D_{e^{\prime}}f\right)
_{e,\,e^{\prime}}$, and similarly for the higher order derivatives which are
denoted by $\nabla^{3},\,\nabla^{4}$ etc. Remark that $\nabla^{k}f\left(  x\right)
$ depends on all the values $f\left(  y\right)  $ with $\mathrm{d}\left(  y,\,x\right)
\leq k.$ We write%
\[
\left\Vert \nabla^{k}f\left(  x\right)  \right\Vert ^{2}=\left\Vert \nabla^{k}f\left(  x\right)  \right\Vert ^{2}_{2}:=\sum_{e_{1}%
,\ldots,e_{k}}\left\vert D_{e_{1}}D_{e_{2}}\cdots D_{e_{k}}f\left(  x\right)
\right\vert^{2}.
\]
We also define $\left\Vert \nabla^{k}f\left(  x\right)  \right\Vert_{\infty}:=\sup_{e_1,\,\ldots,\,e_k}\left|D_{e_1}\cdots D_{e_k}f(x)\right|$.
The Laplacian in \eqref{def:lapl} can be rewritten as%
\[
\Delta f\left(  x\right)  :=\frac{1}{2d}\sum_{e}D_{e}f\left(  x\right)  .
\]
Remark that although the right hand side looks like being a first order
discrete derivative, it is of course a second order one through the presence
of $e$ and $-e$ in the summation. Namely, if we define only the positive coordinate directions as $\{e^{(1)},\,\ldots,\,e^{(d)}\}$, then the alternative definition
\eq{}\label{eq:char_lapl}
\Delta f(x)=-\frac{1}{2d}\sum_{i=1}^d D_{e^{(i)}}D_{-e^{(i)}}f(x)
\eeq{}
holds.
For two square summable functions $f,g$ on $\mathbb{Z}^{d},$ we write%
\[
\left\langle f,g\right\rangle :=\sum_{x\in\mathbb{Z}^{d}}f\left(  x\right)
g\left(  x\right)  .
\]
Summation by parts leads to the following properties:

\begin{lemma}
\label{lem:sumbyparts} Let $f,g:\mathbb{Z}^{d}\rightarrow\mathbb{R}$ be square
summable functions.

\begin{enumerate}
\item[a)] For any $e$%
\begin{equation}
\left\langle D_{e}f,g\right\rangle =\left\langle f,D_{-e}g\right\rangle .
\label{part1}%
\end{equation}

\item[b)]
\begin{equation}
\left\langle \Delta f,g\right\rangle =\left\langle f,\Delta g\right\rangle.
\label{part2}%
\end{equation}

\item[c)]
\begin{equation}
\sum_{e}\left\langle D_{e}f,D_{e}g\right\rangle =-4d\left\langle f,\Delta
g\right\rangle . \label{part4}%
\end{equation}

\end{enumerate}
\end{lemma}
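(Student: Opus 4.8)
The plan is to verify the three summation-by-parts identities directly from the definitions, exploiting the basic translation-invariance of $\langle\cdot,\cdot\rangle$ on $\ell^2(\Z^d)$ and the fact that square-summability makes all the sums below absolutely convergent, so that reindexing is legitimate.

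For part a), I would write $\langle D_e f,g\rangle=\sum_x\big(f(x+e)-f(x)\big)g(x)=\sum_x f(x+e)g(x)-\sum_x f(x)g(x)$, and in the first sum substitute $x\mapsto x-e$ (a bijection of $\Z^d$), obtaining $\sum_x f(x)g(x-e)-\sum_x f(x)g(x)=\sum_x f(x)\big(g(x-e)-g(x)\big)=\langle f,D_{-e}g\rangle$. Part c) then follows by combining a) with the representation $\Delta g(x)=\tfrac{1}{2d}\sum_e D_e g(x)$: one computes $\sum_e\langle D_e f,D_e g\rangle=\sum_e\langle f,D_{-e}D_e g\rangle=\langle f,\sum_e D_{-e}D_e g\rangle$, and since $e$ ranges over all $2d$ neighbours of $0$, the index set is symmetric under $e\mapsto -e$, so $\sum_e D_{-e}D_e g=\sum_e D_e D_{-e}g$; comparing with \eqref{eq:char_lapl}, which gives $-\tfrac{1}{2d}\sum_{i=1}^d D_{e^{(i)}}D_{-e^{(i)}}g=\Delta g$, and noting that summing over all $2d$ neighbours doubles the sum over the $d$ positive directions, yields $\sum_e D_{-e}D_e g=-4d\,\Delta g$, hence \eqref{part4}.

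For part b), I would use c) twice together with the symmetry of the left-hand side of \eqref{part4} in $f$ and $g$: since $\sum_e\langle D_e f,D_e g\rangle=\sum_e\langle D_e g,D_e f\rangle$, identity \eqref{part4} gives $-4d\langle f,\Delta g\rangle=-4d\langle g,\Delta f\rangle$, and \eqref{part2} follows after dividing by $-4d$ and using the obvious symmetry $\langle g,\Delta f\rangle=\langle \Delta f,g\rangle$. Alternatively b) is immediate from applying a) twice in the representation $\Delta f(x)=-\tfrac1{2d}\sum_i D_{e^{(i)}}D_{-e^{(i)}}f(x)$, moving each difference operator across the pairing and then relabelling $e^{(i)}\mapsto -e^{(i)}$.

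There is no real obstacle here; the only point requiring a word of care is the legitimacy of the index shifts and of interchanging the finite sum over $e$ with the infinite sum over $x$, which is guaranteed by square-summability of $f$ and $g$ (so each $D_e f$, $D_e g$ is again square-summable and the pairings are finite by Cauchy--Schwarz). I would state this convergence remark once at the start and then carry out the three computations as above.
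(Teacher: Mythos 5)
Your argument is correct: the index shift for a), the identity $\sum_e D_{-e}D_e g=-4d\,\Delta g$ for c), and either derivation of b) all check out, and the square-summability remark justifies the reindexing. This is exactly the elementary summation-by-parts computation the paper has in mind (it states the lemma without proof), so there is nothing to add.
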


\subsection{The membrane model and statement of the main result}

\begin{definition}[\cite{Sakagawa}, \cite{velenikloc}, \cite{Kurt_thesis}] Let
$W\neq\emptyset$ be a finite subset of $\mathbb{Z}^{d}.$ The \emph{membrane
model} on $W$ is the random field $\{\varphi_{x}\}_{x\in\mathbb{Z}^{d}}%
\in\mathbb{R}^{\mathbb{Z}^{d}}$ with zero boundary conditions outside $W$,
whose distribution is given by
\begin{equation}
P_{W}(\mathrm{d}\varphi)=\frac{1}{Z_{W}}\exp\left(  -\frac{1}{2}\left\langle
\Delta\varphi,\Delta\varphi\right\rangle \right)  \prod_{x\in W}%
\mathrm{d}\varphi_{x}\prod_{x\in W^{\c}}\delta_{0}(\mathrm{d}\varphi_{x}),
\label{def:field}%
\end{equation}
where $Z_{W}$ is the normalizing constant.

In the case $W:=V_{N},$ we simply write $P_{N}$ instead of $P_{V_{N}}.$
\end{definition}

It is notationally convenient to define the field $\left\{  \varphi
_{x}\right\}  $ for $x\in\mathbb{Z}^{d}$, but as $\varphi_{x}=0$ for $x\notin
W,$ it is just a centered Gaussian random vector $\left\{  \varphi
_{x}\right\}  _{x\in W}$. By (\ref{part2}), one has%
\[
\left\langle \Delta\varphi,\Delta\varphi\right\rangle =\left\langle
\varphi,\Delta^{2}\varphi\right\rangle .
\]
Remark that in the inner product on the left hand side, one cannot restrict
the sum to $W$ even if $\varphi$ is $0$ outside $W.$ There is in fact a
contribution from the points at distance $1$ to $W.$ In contrast, in the inner
product on the right hand side, the sum is only over $W$. $P_{W}$, when
regarded as a law of a $\mathbb{R}^{W}$-valued vector, has density
proportional to%
\[
\exp\left(  -\frac{1}{2}\left\langle \varphi,\Delta_{W}^{2}\varphi
\right\rangle \right)
\]
where $\Delta_{W}^{2}=\big(\Delta^{2}(x,\,y)\big)_{\{x,\,y\in W\}}$ is the the
restriction of the Bilaplacian to $W$. Actually, in order to make
(\ref{def:field}) meaningful, one needs that $\Delta_{W}^{2}$ is positive
definite. This follows from the maximum principle for $\Delta$. In fact
$\left\langle \Delta\varphi,\Delta\varphi\right\rangle >0$ holds for all
$\varphi$ which do not vanish identically, and are $0$ on $W^{\c}$. This proves
the positive definiteness of $\Delta_{W}^{2}$.

The covariances of the membrane model are given as%
\begin{equation}
G_{W}(x,\,y):=\operatorname*{cov}\nolimits_{P_{W}}(\varphi_{x},\varphi
_{y})=\left(  \Delta_{W}^{2}\right)  ^{-1}(x,\,y),\quad x,\,y\in W,
\label{Grreen_by_Bilaplacian}%
\end{equation}

It is convenient to extend $G_{W}$ to $x,\,y\in\mathbb{Z}^{d}$ by setting the
entries to $0$ outside $W\times W.$ For $x\in W,$ the function $\mathbb{Z}%
^{d}\ni y\mapsto G_{W}\left(  x,\,y\right)  $ is the unique solution of the
boundary value problem \citep{Kurt_d4}
\begin{equation*}
\left\{
\begin{array}
[c]{ll}%
\Delta^{2}G_{W}(x,\,y)=\delta_{x,\,y}, & y\in W\\
G_{W}(x,\,y)=0, & y\notin W
\end{array}
\right.  .
\end{equation*}
For $d\geq5$ the weak limit $P:=\lim\nolimits_{N\rightarrow\infty}P_{N}$ exits
\cite[Section II]{Sakagawa}. Under $P$, the canonical coordinates $\left\{
\varphi_{x}\right\}  _{x\in\mathbb{Z}^{d}}$ form a centered Gaussian random
field with covariance given by
\begin{equation*}
G(x,\,y)=\Delta^{-2}(x,\,y)=\sum_{z\in\mathbb{Z}^{d}}\Delta^{-1}(x,z)\Delta
^{-1}(z,y)=\sum_{z\in\mathbb{Z}^{d}}\Gamma(x,z)\Gamma(z,y),
\end{equation*}
where $\Gamma$ is the Green's function of the discrete Laplacian on
$\mathbb{Z}^{d}$. In particular observe that \eq{}\label{eq:GtoGamma}G(0,\,0)<+\infty.
\eeq{}
The matrix $\Gamma$ has a representation in terms of the
simple random walk $(S_{m})_{m\geq0}$ on $\mathbb{Z}^{d}$ given by
\[
\Gamma(x,\,y)=\sum_{m\geq0}\mathrm{P}_{x}[S_{m}=y]
\]
($\mathrm{P}_{x}$ is the law of $S$ starting at $x$). This entails that
\begin{equation*}
G(x,\,y)=\sum_{m\geq0}(m+1)\mathrm{P}_{x}[S_{m}=y]=\mathrm{E}_{x,\,y}\left[
\sum_{\ell,\,m=0}^{\infty}\mathbf{1}_{\left\{  S_{m}=\tilde{S}_{\ell}\right\}
}\right]  \label{eq:RW-repG}%
\end{equation*}
where $S$ and $\tilde{S}$ are two independent simple random walks starting at
$x$ and $y$ respectively. $\Gamma$ and $G$ are translation invariant. Using
the above representation one can easily derive the following property of
the covariance:

\begin{lemma}[{\citet[Lemma 5.1]{Sakagawa}}]
\label{lemma: covariance:mm} For $d\geq5$ there exists a constant $\kappa
_{d}>0$%
\begin{equation*}
\lim_{\Vert x\Vert\rightarrow\infty}\frac{G(0,x)}{\Vert x\Vert^{4-d}}%
=\kappa_{d} 
\end{equation*}

\end{lemma}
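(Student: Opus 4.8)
The plan is to derive the asymptotics of $G(0,x)=\Delta^{-2}(0,x)$ from the well-known local central limit theorem for the simple random walk on $\mathbb{Z}^d$, $d\geq 5$. Using the representation $G(0,x)=\sum_{m\geq 0}(m+1)\mathrm{P}_0[S_m=x]$, one sees that $G$ is essentially the convolution of the lattice Green's function $\Gamma$ with itself, so the natural route is to first recall the sharp asymptotics of $\Gamma$, namely $\Gamma(0,x)=a_d\,\|x\|^{2-d}+\O{\|x\|^{-d}}$ as $\|x\|\to\infty$ for an explicit constant $a_d$ (this is classical, e.g.\ via the Fourier representation $\Gamma(0,x)=(2\pi)^{-d}\int_{[-\pi,\pi]^d} e^{\mathrm{i} x\cdot\theta}/(1-\hat p(\theta))\,\De\theta$ together with a stationary-phase/Tauberian estimate near $\theta=0$). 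Then $G(0,x)=\sum_{z}\Gamma(0,z)\Gamma(z,x)$, and the claim follows by showing this discrete convolution is asymptotic to the corresponding continuum convolution $\int_{\mathbb{R}^d} a_d^2\,\|y\|^{2-d}\,\|x-y\|^{2-d}\,\De y$, which scales like $\|x\|^{4-d}$ by the substitution $y=\|x\|u$, producing the constant $\kappa_d=a_d^2\int_{\mathbb{R}^d}\|u\|^{2-d}\|e-u\|^{2-d}\,\De u$ for any unit vector $e$; this last integral converges precisely because $d\geq 5$ makes $2(2-d)+d=4-d<0$ while the local singularities at $0$ and $e$ are integrable.

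Concretely, I would split the sum $\sum_z \Gamma(0,z)\Gamma(z,x)$ into three regions: $z$ near $0$, $z$ near $x$, and $z$ in the "bulk" where both $\|z\|$ and $\|z-x\|$ are of order $\|x\|$. In the bulk region I would substitute the asymptotic form $\Gamma(0,z)\sim a_d\|z\|^{2-d}$, $\Gamma(z,x)\sim a_d\|z-x\|^{2-d}$ and compare the Riemann sum with the convergent integral, controlling the error by the $\O{\|\cdot\|^{-d}}$ correction terms (whose contribution to the convolution is $\o{\|x\|^{4-d}}$ since $(-d)+(2-d)+d=2-d<4-d$ after integration — one should check the borderline integrability carefully). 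For the two "near" regions, say $\|z\|\leq \|x\|/4$, one bounds $\Gamma(z,x)\leq L\|x\|^{2-d}$ uniformly and uses $\sum_{\|z\|\leq \|x\|/4}\Gamma(0,z)\leq L\sum_{k\leq \|x\|/4}k\cdot k^{1-d}\cdot k^{-1}=L\sum_k k^{1-d}$, which is summable for $d\geq 3$; hence this region contributes $\O{\|x\|^{2-d}}=\o{\|x\|^{4-d}}$, and symmetrically for $z$ near $x$. Adding up the three pieces and dividing by $\|x\|^{4-d}$ gives the limit $\kappa_d$.

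An alternative, cleaner derivation uses the Fourier representation $G(0,x)=(2\pi)^{-d}\int_{[-\pi,\pi]^d} e^{\mathrm{i}x\cdot\theta}\,(1-\hat p(\theta))^{-2}\,\De\theta$ directly: since $1-\hat p(\theta)=\tfrac{1}{2d}\|\theta\|^2+\O{\|\theta\|^4}$ near the origin, the integrand has a $\|\theta\|^{-4}$ singularity, and a standard singularity-analysis lemma (of the type: if $\hat f(\theta)\sim c\|\theta\|^{-\alpha}$ near $0$ with $0<\alpha<d$, then $f(x)\sim c'\|x\|^{\alpha-d}$) yields $G(0,x)\sim \kappa_d\|x\|^{4-d}$ with $\kappa_d$ computable from the homogeneous kernel $(2d)^2\|\theta\|^{-4}$ via its inverse Fourier transform in $\mathbb{R}^d$, which is a constant multiple of $\|x\|^{4-d}$ (Riesz potential, valid for $4<d$, i.e.\ $d\geq 5$). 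In either approach the main obstacle is purely technical: justifying the passage from the discrete lattice sum/integral to the continuum object with an error genuinely of smaller order than $\|x\|^{4-d}$, which requires keeping track of the sharp $\O{\|x\|^{-d}}$ correction in the local CLT for $\Gamma$ (or equivalently the $\O{\|\theta\|^4}$ correction in $1-\hat p$) and checking that the resulting error integrals converge and are $\o{\|x\|^{4-d}}$ — the exponent bookkeeping is tight but works for all $d\geq 5$. Since the statement only asserts existence of the limiting constant $\kappa_d>0$ (positivity being immediate as $G\geq 0$ and the limiting integral is a strictly positive convergent integral), no delicate identification of $\kappa_d$ is actually needed.
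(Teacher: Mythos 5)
The paper itself offers no proof of this lemma --- it is simply quoted from Sakagawa's Lemma~5.1 --- so I can only judge your argument on its own terms. Your overall strategy (write $G(0,x)=\sum_z\Gamma(0,z)\Gamma(z,x)$, insert the asymptotics $\Gamma(0,z)\sim a_d\|z\|^{2-d}$, and compare the lattice convolution with the Riesz convolution $\int\|u\|^{2-d}\|e-u\|^{2-d}\,\mathrm{d}u$, or alternatively go through the Fourier representation and a Riesz-potential lemma) is the standard and correct route. However, your treatment of the ``near'' regions is wrong, and wrong in a way that matters. You claim that for $\|z\|\le\|x\|/4$ one has $\sum_{\|z\|\le\|x\|/4}\Gamma(0,z)\le L\sum_k k^{1-d}<\infty$; but the number of lattice points with $\mathrm{d}(0,z)=k$ is of order $k^{d-1}$, and $\Gamma(0,z)\asymp k^{2-d}$ there, so the $k$-th shell contributes order $k$ and $\sum_{\|z\|\le R}\Gamma(0,z)\asymp R^{2}$. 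Consequently the region $\|z\|\le\|x\|/4$ contributes on the order of $\|x\|^{2}\cdot\|x\|^{2-d}=\|x\|^{4-d}$, i.e.\ a \emph{positive fraction} of the limit, not $\mathrm{o}(\|x\|^{4-d})$; this is consistent with the fact that in the limiting integral the ball $\|u\|\le 1/4$ carries strictly positive mass. The same applies near $x$. So the decomposition as you organize it (bulk gives $\kappa_d$, near regions negligible) would, if taken literally, identify the wrong constant and does not prove the stated limit.

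The repair is standard and keeps your architecture: cut at radius $\epsilon\|x\|$ rather than $\|x\|/4$, show that each near region contributes at most $L\epsilon^{2}\|x\|^{4-d}$ uniformly in $x$ (this is exactly the corrected form of your bound, since $\sum_{\|z\|\le\epsilon\|x\|}\Gamma(0,z)\le L(\epsilon\|x\|)^{2}$), perform the Riemann-sum comparison on the bulk $\{\epsilon\|x\|\le\|z\|,\ \epsilon\|x\|\le\|z-x\|\}$, and only then let $\epsilon\to0$, noting that the $\epsilon$-balls also carry $O(\epsilon^{2})$ of the limiting integral. With that modification the convolution argument closes, and your bookkeeping for the $O(\|z\|^{-d})$ correction to $\Gamma$ indeed yields only $\|x\|^{2-d}$ (up to logarithms) error terms, which are $\mathrm{o}(\|x\|^{4-d})$ for $d\ge5$. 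Your alternative Fourier sketch is a legitimate and arguably cleaner route, but as written it outsources the entire difficulty to an unproved ``singularity-analysis lemma,'' so it does not by itself rescue the main argument.
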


In other words, as $\Vert x-y\Vert\rightarrow+\infty,$ the covariance between
$\varphi_{x}$ and $\varphi_{y}$ decays like $\kappa_{d}\Vert x-y\Vert^{4-d}$
in the supercritical dimensions.

For $d=4$, $\lim_{N\rightarrow+\infty}P_{N}$ does not exist, and in fact,
$\operatorname*{var}_{P_{N}}\left(  \varphi_{0}\right)  \rightarrow+\infty.$ It
is known that $G_{N}(x,\,y)$ behaves in first order as $\gamma_{4}(\log
N-\log\Vert x-y\Vert)$ for some $\gamma_{4}\in(0,\,+\infty),$ if $x$ and $y$
are not too close to the boundary of $V_{N},$ see \citet[Lemma~2.1]{Cip13}.

\begin{definition}
[Pinned membrane model]Let $\varepsilon>0$. The \emph{membrane model on }%
$W$\emph{ with pinning of strength $\varepsilon$} is defined as
\begin{equation}
P_{W}^{\varepsilon}(\mathrm{d}\varphi)=\frac{1}{Z_{W}^{\varepsilon}}%
\exp\left(  -\frac{1}{2}\left\langle \Delta\varphi,\Delta\varphi\right\rangle
\right)  \prod_{x\in W}\left(  \mathrm{d}\varphi_{x}+\varepsilon\delta
_{0}(\mathrm{d}\varphi_{x})\right)  \prod_{x\in W^{\c}}\delta_{0}%
(\mathrm{d}\varphi_{x}),\label{def:pinned}%
\end{equation}
where $Z_{W}^{\varepsilon}$ is the normalizing constant%
\[
Z_{W}^{\varepsilon}:=\int\exp\left(  -\frac{1}{2}\left\langle \Delta
\varphi,\Delta\varphi\right\rangle \right)  \prod_{x\in W}\left(
\mathrm{d}\varphi_{x}+\varepsilon\delta_{0}(\mathrm{d}\varphi_{x})\right)
\prod_{x\in W^{\c}}\delta_{0}(\mathrm{d}\varphi_{x}).
\]
In case $W=V_{N}$, we write $P_{N}^{\varepsilon}$ and $Z_{N}^{\varepsilon}$ instead.
\end{definition}

Our main result shows that for any positive pinning strength $\varepsilon$ the
correlations between two points decay exponentially in the distance. 

\begin{theorem}
[Decay of covariances, supercritical case]\label{Thm_main} Let $d\geq5$ and
$\varepsilon>0$. Then there exist $C,\eta>0$ depending on $\varepsilon$ and
$d$, but not on $N,$ such that
\begin{equation*}
\left\vert E_{N}^{\varepsilon}[\varphi_{x}\varphi_{y}]\right\vert \leq
C\mathrm{e}^{-{\eta}\left\Vert x-y\right\Vert } 
\end{equation*}
whenever $x,\,y\in V_{N}$.
\end{theorem}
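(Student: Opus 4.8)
The overall strategy is to exploit the $\delta$-pinning: conditionally on the (random) set of pinned sites $\mathcal{P}=\{x\in W:\varphi_x=0\}$, the field is a membrane model on $W\setminus\mathcal{P}$ with zero boundary conditions on $\mathcal{P}\cup W^{\c}$, so its covariance is $G_{W\setminus\mathcal{P}}$. Thus one has the representation
\[
E_N^{\varepsilon}[\varphi_x\varphi_y]=E_N^{\varepsilon}\!\left[\,G_{W\setminus\mathcal{P}}(x,y)\,\right],
\]
and it suffices to bound $G_A(x,y)$ for the relevant complements $A=W\setminus\mathcal{P}$. The key deterministic input — this is where Maz'ya's contribution enters — is that if the set of traps $\mathcal{P}$ is \emph{sufficiently dense} in the sense that every box of some fixed side length $R$ meets $\mathcal{P}$, then $G_A(x,y)$ decays exponentially: $|G_A(x,y)|\le L\,\mathrm{e}^{-\eta\,\mathrm{d}(x,y)}$ with $\eta$ depending only on $R$ and $d$. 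I would prove this via the variational characterisation of $\Delta_A^{-2}$ together with the weighted (exponential-weight) discrete norm estimates announced in the abstract: one tests $\langle\Delta f,\Delta f\rangle$ against $f$ multiplied by $\mathrm{e}^{\eta\,\mathrm{d}(\cdot,x)}$ and uses a discrete Poincaré/Hardy-type inequality valid because $f$ is forced to vanish on a dense set of points, to absorb the error terms coming from commuting the weight past the discrete Bilaplacian. This step — making the weighted a priori estimate for $\Delta^2$ with a dense trap set robust, in particular getting the commutator terms $\nabla^k(\mathrm{e}^{\eta\,\mathrm{d}})$ under control uniformly — is the analytic heart of the argument and the main obstacle.

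With the deterministic estimate in hand, the probabilistic part is to show that under $P_N^{\varepsilon}$ the pinned set $\mathcal{P}$ is, with overwhelming probability and in a way that survives the expectation, dense at the required scale $R$. The natural route is a stochastic-domination (Bernoulli domination) statement: the indicator field $(\mathbf{1}_{\{\varphi_x=0\}})_x$ dominates a Bernoulli product field of some intensity $p=p(\varepsilon)>0$ uniformly in $N$ and in boundary conditions. Such a domination is standard for pinned Gaussian fields — the reward $\varepsilon\delta_0$ makes pinning at $x$ likely given any configuration off $x$, with conditional probability bounded below by $\varepsilon/(\varepsilon+\sqrt{2\pi\,\mathrm{var}(\varphi_x\mid\mathcal{F}_{\{x\}^{c}})})\ge\varepsilon/(\varepsilon+\sqrt{2\pi G(0,0)})$ using \eqref{eq:GtoGamma} and the fact that conditional variances are dominated by the full-lattice variance — and the excerpt explicitly flags a Bernoulli domination result as available from Section~\ref{sec:general}. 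From Bernoulli domination one runs a coarse-graining/percolation argument: tile $\mathbb{Z}^d$ by boxes of side $R$, call a box good if it contains a pinned site, and choose $R$ large enough that the good boxes dominate supercritical site percolation, so that in particular \emph{every} box is good with probability at least $1-\mathrm{e}^{-cR^d}$; more robustly, one partitions space into blocks and shows that the random exponential rate $\eta(\mathcal{P})$ one actually gets is bounded below on an event of probability close to $1$, and handles the bad event crudely using the trivial bound $|G_{W\setminus\mathcal{P}}(x,y)|\le G_W(x,y)\le G(x,y)$ together with the polynomial decay from Lemma~\ref{lemma: covariance:mm}.

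Assembling: write $E_N^\varepsilon[\varphi_x\varphi_y]=E_N^\varepsilon[G_{W\setminus\mathcal P}(x,y)\mathbf 1_{\mathcal G}]+E_N^\varepsilon[G_{W\setminus\mathcal P}(x,y)\mathbf 1_{\mathcal G^c}]$, where $\mathcal G$ is the good event on which $\mathcal P$ is $R$-dense along a tube/slab separating $x$ from $y$. On $\mathcal G$ the deterministic bound gives $L\mathrm e^{-\eta\mathrm d(x,y)}$; on $\mathcal G^c$ one bounds $|G_{W\setminus\mathcal P}(x,y)|\le G(0,0)$ (monotonicity of $G_A$ in $A$, via the variational characterisation) and $P_N^\varepsilon(\mathcal G^c)\le L\mathrm e^{-c\,\mathrm d(x,y)}$ from the percolation estimate, so the product is again exponentially small. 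Taking $\eta$ to be the minimum of the two rates and $C$ the corresponding constant yields the theorem, uniformly in $N$. The only place where $d\ge5$ is essential is in guaranteeing $G(0,0)<\infty$ (so that the conditional pinning probabilities are bounded below and the trivial bound on $\mathcal G^c$ is finite); the deterministic weighted estimate and the percolation step are dimension-robust.
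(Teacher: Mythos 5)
Your overall skeleton (expand the pinning measure into a mixture over pinned sets, dominate the pinned set from below by Bernoulli site percolation using $G(0,0)<\infty$ in $d\ge 5$, then feed a deterministic decay estimate for $G^{\mathcal A}$ into the expectation) matches the paper's. But there is a genuine gap in the probabilistic assembly. Your deterministic input requires a \emph{uniform} density of traps (every box of side $R$ meets the pinned set along a tube/slab separating $x$ from $y$), and you then claim $P_N^\varepsilon(\mathcal G^{\mathrm c})\le L\mathrm e^{-c\,\mathrm d(x,y)}$. For fixed $R$ the opposite is true: each $R$-box is empty of (interior) pinned points with a fixed positive probability, the boxes are essentially independent under the Bernoulli minorant, so the probability that \emph{all} $\sim\|x-y\|/R$ boxes in the tube are good is itself exponentially small in $\|x-y\|$, i.e.\ $\mathcal G^{\mathrm c}$ has probability tending to $1$, not to $0$. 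Since on $\mathcal G^{\mathrm c}$ you only have an $O(1)$ (or, if it were justified, polynomial) bound, the decomposition does not yield exponential decay. This is exactly the "key difficulty" the paper isolates: there is no uniform bound on the distance to the trapping set, so a density-at-scale-$R$ event cannot work. The paper circumvents it by making the decay rate degrade gracefully with local sparsity: it introduces the random weighted metric $\widehat{\mathrm d}_{\mathcal A}$ with site weights $q_{\mathcal A}(x)=(1+\mathrm d(x,\widehat{\mathcal A})^{2d+3})^{-1}$ and the matching random Sobolev norms, proves exponential decay of $\|h\|^2_{\mathcal A,C_n^{\mathrm c}}$ in $n$ for the metric balls $C_n$ (Lemmas~\ref{lem:equivalence}--\ref{lem:deterministic}, Corollary~\ref{Cor_main}), and then uses a renormalization/Peierls argument (Lemma~\ref{Le_main}) to show that $C_n$ has Euclidean radius at most $Kn$ outside an event of probability $\mathrm e^{-\lambda n}$; the point is that only a positive \emph{fraction} of good coarse boxes along every renormalized path is needed, which is exponentially likely, in contrast to your all-boxes-good event.

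Two secondary issues. First, your crude bound $|G_{W\setminus\mathcal P}(x,y)|\le G_W(x,y)\le G(x,y)$ is unjustified: there are no correlation inequalities for the membrane model, and off-diagonal entries of $G^{A}$ are not monotone in the domain (they need not even be positive); the only safe bound is Cauchy--Schwarz plus variance monotonicity, $|G^{\mathcal A}_W(x,y)|\le G(0,0)$, which is what the paper uses — and which is a constant, so it cannot rescue a bad event of non-small probability. Second, for the Hardy/Poincar\'e step for the Bilaplacian, vanishing of $f$ at isolated pinned sites does not control the discrete gradient there; you need pinned points all of whose neighbors are pinned (the set $\widehat{\mathcal A}$), which is why the paper's density events and the Bernoulli cost involve $p^{2d+1}$ rather than $p$. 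That point is fixable within your scheme, but it should be stated; the first gap, by contrast, requires the random-metric (or some equivalent non-uniform) construction and is not a cosmetic repair.
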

\begin{remark}{Note that one can show that adding a mass to the membrane model implies exponential decay of correlations}.
\end{remark}
{\subsection{Proof outline}
To motivate our approach, consider the following PDE problem in continuous
space. Let%
\[
\Omega:=\mathbb{R}^{n}\backslash\bigcup\nolimits_{i}B_{r}\left(  x_{i}\right)
,
\]
where $\left\{  B_{r}\left(  x_{i}\right)  \right\}_i  $ is a collection of
closed non-overlapping balls of radius $r$ which is sufficiently dense. For instance, assume
to take $\left\{  x_{i}\right\} _i :=\mathbb{Z}^{d},$ and
$r\leq1/4$. The function $u:\Omega\rightarrow\mathbb{R}$ is assumed to be smooth and to satisfy $\Delta^{2}u=f$, where $f$ is a smooth function on $\Omega$ of
compact support, $\Delta^{2}$ is the continuum bilaplacian, and $u$ and
$\nabla u$ have $0$-boundary conditions at $\partial\Omega$. Is it true that $u$ is
exponentially decaying at infinity, assuming only some mild growth condition?
One can answer positively to this question as follows (the authors learned this argument from Vladimir Maz'ya): the key observation is
that if $u$ on $\Omega$ satisfies $0$-boundary conditions, one can obtain the
equivalence of the standard second order Sobolev norm $\left\Vert u\right\Vert
_{H^{2}\left(  \Omega\right)  }$ with the $L_{2}$-norm of the second
derivative; in other words, the $L_{2}$-norm of $u$ and of $\nabla u$ can be
estimated by the $L_{2}$-norm of the second derivatives. In our case, this
follows by selecting a linear path from every point $x\in\Omega$ to the
boundary $\partial\Omega,$ and then using the $0$-boundary conditions and
partial integration along the path to estimate $f$ and $\nabla f$ in terms of
the second derivative. Such equivalences are discussed in much greater
generality in \citet{MazyaBook}.
Choose now a sequence of concentric balls $C_{n}:=B_{n}\left(  0\right)
\cap\Omega,$ starting with $n$ such that $C_{n}$ contains the support of $f,$
and choose smooth functions $\eta_{n}:\mathbb{\Omega\rightarrow}\left[
0,1\right]  $, interpolating between $1$ outside $C_{n+1}$ and $0$ on $C_{n}.$
Then%
\begin{align*}
\left\Vert u\right\Vert _{H^{2}\left(  C_{n+1}^{c}\right)  }  & =\left\Vert
\eta_{n}u\right\Vert _{H^{2}\left(  C_{n+1}^{c}\right)  }\leq\left\Vert
\eta_{n}u\right\Vert _{H^{2}\left(  \Omega\right)  }\\
& \leq\operatorname*{const}\times\left\Vert \nabla^{2}\left(  \eta
_{n}u\right)  \right\Vert _{L^{2}\left(  \Omega\right)  }%
=\operatorname*{const}\times\left\Vert \nabla^{2}\left(  \eta_{n}u\right)
\right\Vert _{L^{2}\left(  C_{n+1}\backslash C_{n}\right)  }\\
& \leq\operatorname*{const}\times\left\Vert u\right\Vert _{H^{2}\left(
C_{n+1}\backslash C_{n}\right)  }=\operatorname*{const}\times\left[
\left\Vert u\right\Vert _{H^{2}\left(  C_{n}^{c}\right)  }-\left\Vert
u\right\Vert _{H^{2}\left(  C_{n+1}^{c}\right)  }\right]  ,
\end{align*}
which proves the exponential decay of the Sobolev norms. In the second
inequality, we have used the equivalence of the norms. In the second line,
we have used that $\eta_{n}=1$ outside $C_{n+1}$ and that $u$ is biharmonic. Of
course, we have always assumed as an input that the above Sobolev norms are
finite, but in our problem this will not be a difficulty.
}

{
The application to our setting requires a number of modifications. The first step
is to notice that the environment $\mathcal{A}$ of pinned points,
corresponding to the \textquotedblleft holes\textquotedblright\ $B_{r}\left(
x_{i}\right)  $ above, can be dominated stochastically by a Bernoulli site
percolation measure. The boundary conditions for the discrete derivative on
the pinned sites $\mathcal{A}$ is however not easily computable, and in general it is
not $0.$ For that reason we work with the inner points $\widehat{\mathcal{A}%
},$ and use the fact that the law of this set is dominated by a Bernoulli measure, too.
However the key difficulty is that there is certainly not an upper bound to the distance 
between any point in $\mathbb{Z}^{d}$ to any trapping points in
$\widehat{\mathcal{A}},$ in contrast to the continuum situation
sketched above, and therefore there is no equivalence of norms (with
discrete derivatives, of course). The way to solve this problem is to
introduce random Sobolev norms which depend on the random set $\widehat
{\mathcal{A}},$ and then prove that, in an appropriate sense, the Sobolev norm
involving randomly weighted discrete derivatives up to the second order is
equivalend to one coming from the second derivative only. This however makes
it necessary to adapt the choice of the the sequence $C_{n}$ to the random
trapping set $\mathcal{A}$. Indeed, it is necessary to choose the
interpolating functions $\eta_{n}$ in such a way that the derivatives are
small in regions where there are few points in $\widehat{\mathcal A}$. This leads to a random choice of
the $C_{n}$'s, and in the end, one has to use a percolation argument to prove
that the radius of the $C_{n}$'s still grows linearly in
$n$ with overwhelming probability. This would not be possible choosing the $C_{n}$'s as concentric balls.}
\begin{remark}
A more natural statement would be that $P^{\varepsilon}:=\lim_{N\rightarrow
\infty}P_{N}^{\varepsilon}$ has exponentially decaying covariances.
Unfortunately, we do not know if this limit exists. The proof in \citet{BoltVel} of
the existence of the weak limit in the gradient case uses correlation
inequalities which are not valid in the membrane case.
\end{remark}

\begin{remark}[Outlook on the case $d=4$]
The restriction to $d\geq5$ is coming from a domination of the measure
$\nu_{N}^{\varepsilon}$ defined in (\ref{eq:def_zeta}) from below by a
Bernoulli measure which is true in a strong sense only for $d\geq5.$ The other
steps of the proof do not depend on this dimension restriction in an essential
way. The method we apply here would give for $d=4$ an estimate of $\left\vert
E_{N}^{\varepsilon}[\varphi_{x}\varphi_{y}]\right\vert $ in the form
$\exp\left[  -\eta\left\Vert x-y\right\Vert \left(  \log N\right)  ^{-\alpha
}\right]  $ with some $\eta,\,\alpha>0.$ This is of course disappointing as
for fixed $x,\,y,$ one would not get decay properties which are uniformly in $N$,
and one would also not get boundedness of the variances $\operatorname*{var}%
_{P_{N}^{\varepsilon}}\left(  \varphi_{0}\right)  $. We remark also that with techniques similar to those of the present paper \citet{BCK} show stretched exponential decay of covariances in $d\ge 4$. 

We however expect that with some weaker domination properties, as the one used
in \citet{BoltVel} for $d=2$, one could prove exponential decay also for the
membrane model in $d=4.$ However, the proofs used in \citet{BoltVel} rely again on
correlation inequalities, so a proof eludes us.

It is well possible that exponential decay of correlations is true also for
lower dimensions $d=2,3$, but we do not know of a method which could
successfully be applied.
\end{remark}

\section{General results on the membrane model\label{sec:general}}

Let $B\subset A\Subset\mathbb{Z}^{d}.$ As the Hamiltonian of the
membrane model is represented through an interaction of range $2,$ the
conditional distribution of $\left\{  \varphi_{x}\right\}  _{x\in B}$ under
$P_{A}$ given $\mathcal{F}_{A\backslash B}$ depends only on $\left\{
\varphi_{y}\right\}  _{y\in\partial_{2}B\cap A},$ where $\partial
_{2}B:=\left\{  y\notin B:\mathrm{d}\left(  y,B\right)  \leq2\right\}  $. As
the measures are Gaussian, for $x\in B$ one has that $E_{A}\left[  \varphi_{x}|\mathcal{F}_{A\setminus
B}\right]  $ is a linear combination of the variables $\left\{  \varphi
_{y}\right\}  _{y\in\partial_{2}B\cap A}$.

From general properties of Gaussian distributions, one easily gets the
following result.

\begin{proposition}[{\citet[Lemma~2.2]{Cip13}}]
\label{prop:MP} Let $A$ be a finite subset of $\mathbb{Z}^{d},$ and $B\subset
A$, and let $\left\{  \varphi_{x}\right\}  _{x\in\mathbb{Z}^{d}}$ be the
membrane model under the measure $P_{A}$. Let further $\left\{  \varphi
_{x}^{\prime}\right\}_{x\in B}  $ be independent of $\left\{  \varphi_{x}\right\}_{x\in B}  $
and distributed according to $P_{B}$, i.e. with $0$-boundary conditions
outside $B.$ Then $\{\varphi_{x}\}_{x\in B}$ has the same distribution under
$P_{A}$ as $\left\{  E_{A}\left[  \varphi_{x}|\mathcal{F}_{A\setminus
B}\right]  +\varphi_{x}^{\prime}\right\}  _{x\in B}$.
\end{proposition}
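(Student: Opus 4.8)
The statement is the Gaussian Markov decomposition ``conditional mean plus independent fluctuation'', and the plan is to read it off the precision (inverse covariance) matrix rather than to manipulate conditional densities directly. First I would recall from \eqref{def:field} that under $P_A$ the vector $\{\varphi_x\}_{x\in A}$ is a centered Gaussian whose density is proportional to $\exp\!\left(-\tfrac12\langle\varphi,\Delta_A^2\varphi\rangle\right)$; thus its precision matrix is $\Delta_A^2$ and its covariance is $G_A=(\Delta_A^2)^{-1}$. The key algebraic observation is that, since $B\subset A$ and $\Delta_A^2=(\Delta^2(x,y))_{x,y\in A}$ is by definition the restriction of the Bilaplacian to $A\times A$, the $B\times B$ block of $\Delta_A^2$ is exactly $(\Delta^2(x,y))_{x,y\in B}=\Delta_B^2$, i.e.\ the precision matrix of $P_B$.

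Next I would invoke the standard formula for conditioning a nondegenerate centered Gaussian vector: writing the precision matrix of $(\varphi_B,\varphi_{A\setminus B})$ in block form with $B$-block $Q_{BB}$, the conditional law of $\varphi_B$ given $\varphi_{A\setminus B}$ is Gaussian with covariance $Q_{BB}^{-1}$ --- which does not depend on the conditioning values --- and with mean a linear function of $\varphi_{A\setminus B}$. Applied with $Q_{BB}=\Delta_B^2$, this says that under $P_A$ the conditional law of $\{\varphi_x\}_{x\in B}$ given $\mathcal F_{A\setminus B}$ is the Gaussian law with covariance $(\Delta_B^2)^{-1}=G_B$ and mean $\{E_A[\varphi_x\mid\mathcal F_{A\setminus B}]\}_{x\in B}$; in particular the residual
\[
R:=\bigl\{\varphi_x-E_A[\varphi_x\mid\mathcal F_{A\setminus B}]\bigr\}_{x\in B}
\]
is a centered Gaussian vector uncorrelated with every $\varphi_y$, $y\in A\setminus B$, hence independent of $\mathcal F_{A\setminus B}$, and it has the law of the membrane model $P_B$ with $0$-boundary conditions outside $B$, i.e.\ $R\overset{d}{=}\{\varphi'_x\}_{x\in B}$. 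One may note in passing, using the sparsity $\Delta^2(x,y)=0$ for $\mathrm d(x,y)>2$, that the conditional mean depends only on $\{\varphi_y\}_{y\in\partial_2 B\cap A}$, though this refinement is not needed for the distributional identity.

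It then remains to assemble the pieces. I would set $M:=\{E_A[\varphi_x\mid\mathcal F_{A\setminus B}]\}_{x\in B}$, which is $\mathcal F_{A\setminus B}$-measurable, so that $\{\varphi_x\}_{x\in B}=M+R$ with $R$ independent of $M$; consequently $\mathrm{Law}(\{\varphi_x\}_{x\in B})$ is the convolution of $\mathrm{Law}(M)$ with $P_B$. Since $\{\varphi'_x\}_{x\in B}$ is realized independently of the membrane field (the natural reading of the independence hypothesis, and all that is used) and has law $P_B$, the law of $\{E_A[\varphi_x\mid\mathcal F_{A\setminus B}]+\varphi'_x\}_{x\in B}$ is also the convolution of $\mathrm{Law}(M)$ with $P_B$; hence the two vectors coincide in distribution, which is the claim.

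I do not expect a serious obstacle here: essentially everything reduces to the single identity ``$B$-block of $\Delta_A^2$ equals $\Delta_B^2$'', which should be spelled out from the definition of $\Delta_W^2$. The only mild point of care is to apply the conditioning formula to the genuinely nondegenerate vector $\{\varphi_x\}_{x\in A}$ --- whose nondegeneracy is exactly the positive definiteness of $\Delta_A^2$ recorded after \eqref{def:field} --- rather than to the degenerate vector $\{\varphi_x\}_{x\in\mathbb Z^d}$, which vanishes identically off $A$.
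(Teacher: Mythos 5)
Your proof is correct, and it is essentially the argument the paper is appealing to: the paper gives no proof of its own (it invokes ``general properties of Gaussian distributions'' and cites \citet[Lemma~2.2]{Cip13}), and your precision-matrix route — noting that the $B\times B$ block of $\Delta_A^2$ is $\Delta_B^2$ by the very definition of $\Delta_W^2$ as the restriction of the full Bilaplacian, so the conditional law of $\{\varphi_x\}_{x\in B}$ given $\mathcal F_{A\setminus B}$ has covariance $G_B$ independent of the conditioning, and the residual is an independent $P_B$-field — is exactly the standard decomposition behind that citation. The only caveat you already flag correctly: the independence hypothesis must be read as independence of $\{\varphi'_x\}_{x\in B}$ from the whole field (in particular from $\mathcal F_{A\setminus B}$), which is what the convolution step uses.
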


\begin{corollary}
\label{Cor_variance_monotone}Let $B\subset A$ be finite subsets of
$\mathbb{Z}^{d},$ and $x_{1},\ldots,x_{k}\in B,\ \lambda_{1},\ldots
,\lambda_{k}\in\mathbb{R}$, then%
\[
\operatorname*{var}\nolimits_{P_{B}}\left(  \sum\nolimits_{i=1}^{k}\lambda
_{i}\varphi_{x_{i}}\right)  \leq\operatorname*{var}\nolimits_{P_{A}}\left(
\sum\nolimits_{i=1}^{k}\lambda_{i}\varphi_{x_{i}}\right)  .
\]

\end{corollary}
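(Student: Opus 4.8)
The plan is to read the statement off directly from Proposition~\ref{prop:MP}. Fix finite sets $B\subset A\subset\mathbb{Z}^d$, points $x_1,\dots,x_k\in B$ and reals $\lambda_1,\dots,\lambda_k$, and put $Y:=\sum_{i=1}^k\lambda_i\varphi_{x_i}$. Proposition~\ref{prop:MP} is a joint distributional identity for the whole vector $\{\varphi_x\}_{x\in B}$, hence it applies to the linear combination $Y$: under $P_A$ the random variable $Y$ has the same law as $Z+Y'$, where
\[
Z:=\sum_{i=1}^k\lambda_i\,E_A\!\left[\varphi_{x_i}\mid\mathcal{F}_{A\setminus B}\right],\qquad Y':=\sum_{i=1}^k\lambda_i\varphi'_{x_i},
\]
the field $\{\varphi'_x\}_{x\in B}$ has law $P_B$ and is chosen independent of $\{\varphi_x\}_{x\in\mathbb{Z}^d}$, so that $Y'$ has the law of $Y$ under $P_B$ and is independent of the $\mathcal{F}_{A\setminus B}$-measurable variable $Z$.

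First I would split the variance by this independence --- equivalently, apply the law of total variance with conditioning $\sigma$-field $\mathcal{F}_{A\setminus B}$, using that the conditional law of $\{\varphi_x\}_{x\in B}$ under $P_A$, once centered by its conditional mean, is exactly $P_B$. This gives
\[
\operatorname{var}_{P_A}(Y)=\operatorname{var}(Z+Y')=\operatorname{var}(Z)+\operatorname{var}(Y')=\operatorname{var}(Z)+\operatorname{var}_{P_B}(Y),
\]
and since $\operatorname{var}(Z)\ge 0$ the claimed inequality $\operatorname{var}_{P_B}(Y)\le\operatorname{var}_{P_A}(Y)$ follows at once.

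There is no genuine obstacle here; the corollary is an immediate consequence of the Gibbs--Markov structure encoded in Proposition~\ref{prop:MP}, the only substantive input being that conditioning $P_A$ on $\mathcal{F}_{A\setminus B}$ reproduces $P_B$ up to a shift by a function of the boundary values. As an alternative one could argue purely by linear algebra: writing the precision matrix $\Delta_A^2$ of $\{\varphi_x\}_{x\in A}$ in block form along $A=B\cup(A\setminus B)$, the covariance of $\{\varphi_x\}_{x\in B}$ under $P_A$ is the inverse of the Schur complement of the $(A\setminus B)$-block, which is dominated in the positive semidefinite order by $\Delta_B^2=\big(\Delta^2(x,y)\big)_{x,y\in B}$; inverting reverses the order and yields the quadratic-form inequality $\operatorname{var}_{P_B}(Y)\le\operatorname{var}_{P_A}(Y)$ for every such linear functional $Y$.
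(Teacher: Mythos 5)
Your argument is correct and is essentially the paper's own proof: both apply Proposition~\ref{prop:MP} to the linear combination, writing it under $P_A$ as the sum of the $\mathcal{F}_{A\setminus B}$-measurable conditional expectation and an independent copy distributed under $P_B$, so the variances add and the nonnegativity of the first term gives the inequality. The Schur-complement remark is a fine alternative but not needed.
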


\begin{proof}
By the previous proposition, $\sum\nolimits_{i=1}^{k}\lambda_{i}\varphi
_{x_{i}}$ has under $P_{A}$ the same law as%
\[
E_{A}\left[  \sum\nolimits_{i=1}^{k}\lambda_{i}\varphi_{x_{i}}|\mathcal{F}%
_{A\setminus B}\right]  +\sum\nolimits_{i=1}^{k}\lambda_{i}\varphi_{x_{i}%
}^{\prime}%
\]
where $\left\{  \varphi_{x}^{\prime}\right\}  _{x\in B}$ is independent of the
first summand and distributed according to $P_{B}.$ From that, the claim follows.
\end{proof}

For $A\subset W\subset\mathbb{Z}^{d}$ we write $P_{W}^{A}:=P_{W\backslash A},$ i.e. the
membrane model with $0$-boundary conditions on both $W^{\c}$ and on $A.$ We use $E^A_W$ to denote the average with respect to $P_W^A$. We also
write $G_{W}^{A}$ for the corresponding covariance matrix. If $A=\emptyset,$
then $P_{W}^{\emptyset}=P_{W}.$ Again, we just use the index $N$ if $W=V_{N}.$

\begin{corollary}
Let $A\subset\mathbb{Z}^{d}$, and $d\geq5.$ Then the weak limit $P^{A}%
:=\lim_{N\rightarrow+\infty}P_{N}^{A}$ exists, and it is a centered Gaussian field,
with covariances%
\[
G^{A}\left(  x,\,y\right)  =\lim_{N\rightarrow+\infty}G_{N}^{A}\left(
x,\,y\right) ,\quad x,\,y\in \Z^d .
\]

\end{corollary}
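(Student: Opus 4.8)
The plan is to establish the existence of the weak limit $P^{A}$ by combining the monotonicity statement of Corollary~\ref{Cor_variance_monotone} with the observation that for $d\geq 5$ the variances of linear functionals stay bounded along the sequence $V_{N}\setminus A$. Since $P^{A}_{N} = P_{N\setminus A}$ and $V_{N}\setminus A \uparrow \mathbb{Z}^{d}\setminus A$ as $N\to\infty$, Corollary~\ref{Cor_variance_monotone} applied with $B = V_{N}\setminus A \subset V_{M}\setminus A = A'$ for $M\geq N$ shows that for fixed $x_{1},\dots,x_{k}$ and $\lambda_{1},\dots,\lambda_{k}$, the sequence $\operatorname{var}_{P^{A}_{N}}\big(\sum_{i}\lambda_{i}\varphi_{x_{i}}\big)$ is nondecreasing in $N$. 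It remains to see it is bounded. For this I would again use Corollary~\ref{Cor_variance_monotone}, now comparing $P_{N\setminus A}$ with the membrane model \emph{without} the extra zero-boundary set $A$, i.e.\ with $P_{N}$: since $V_{N}\setminus A \subset V_{N}$, we get $\operatorname{var}_{P^{A}_{N}}\big(\sum_{i}\lambda_{i}\varphi_{x_{i}}\big)\leq \operatorname{var}_{P_{N}}\big(\sum_{i}\lambda_{i}\varphi_{x_{i}}\big)$, and the latter converges (hence is bounded) because $P=\lim_{N}P_{N}$ exists for $d\geq 5$ by the cited result of Sakagawa together with \eqref{eq:GtoGamma}.

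Having bounded, monotone variances, the next step is to upgrade convergence of variances of arbitrary linear functionals to convergence of the full covariance structure, and then to weak convergence of the Gaussian measures. Because each $P^{A}_{N}$ is centered Gaussian on $\mathbb{R}^{\mathbb{Z}^{d}}$ (with the coordinates outside $V_{N}\setminus A$ identically zero), a linear functional $\sum_{i}\lambda_{i}\varphi_{x_{i}}$ is a centered Gaussian real random variable whose law is determined by its variance; monotone bounded convergence of these variances gives convergence in distribution of every such functional, and by polarization the covariances $G^{A}_{N}(x,y)$ converge to some limit $G^{A}(x,y)$ for all $x,y\in\mathbb{Z}^{d}$ (the entries with $x\in A$ or $y\in A$ being identically $0$). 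Convergence of all finite-dimensional characteristic functions $\exp\big(-\tfrac12\operatorname{var}_{P^{A}_{N}}(\sum_i\lambda_i\varphi_{x_i})\big)\to\exp\big(-\tfrac12\sum_{i,j}\lambda_i\lambda_j G^{A}(x_i,x_j)\big)$ then yields convergence of all finite-dimensional distributions, which for the product topology on $\mathbb{R}^{\mathbb{Z}^{d}}$ is exactly weak convergence $P^{A}_{N}\Rightarrow P^{A}$; the limit is the centered Gaussian field with covariance $G^{A}$, provided this kernel is positive semidefinite, which it is as a pointwise limit of positive semidefinite kernels.

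I expect the only genuinely delicate point to be the uniform variance bound, and it is mild: one must make sure the comparison is set up in the correct direction, namely that enlarging the domain on which the field is free (equivalently, shrinking the zero-boundary region) can only increase variances, so that $P_{N\setminus A}$ has smaller variances than $P_{N}$, and the known finiteness of $G(0,0)$ and translation invariance of $G$ then cap $\operatorname{var}_{P_{N}}(\sum_i \lambda_i\varphi_{x_i}) \leq (\sum_i |\lambda_i|)^{2}\,G(0,0)$ uniformly in $N$. Everything else is the standard fact that a monotone bounded family of centered Gaussian laws on a countable-index product space converges weakly to a centered Gaussian law; no new idea beyond Proposition~\ref{prop:MP} and its corollary is required, which is presumably why the statement is offered as a corollary.
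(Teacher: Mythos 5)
Your proposal is correct and follows essentially the same route as the paper: monotonicity of variances via Corollary~\ref{Cor_variance_monotone}, the uniform bound $G_{N}^{A}(x,x)\leq G_{N}(x,x)\leq G(x,x)<+\infty$ from \eqref{eq:GtoGamma}, and then convergence of off-diagonal covariances by polarization (the paper uses $\operatorname{var}_{P_{N}^{A}}(\varphi_{x}+\varphi_{y})$), after which the standard Gaussian argument gives weak convergence of the centered Gaussian laws. Your version is merely a bit more explicit (general linear functionals, characteristic functions on the product space) but contains no new idea and no gap.
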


\begin{proof}
By Corollary \ref{Cor_variance_monotone}, $G_{N}^{A}\left(  x,x\right)
\uparrow G^{A}\left(  x,x\right)  <+\infty$ for all $x,$ as $N\rightarrow
+\infty.$ The finiteness comes from $G_{N}^{A}\left(  x,x\right)  \leq
G_{N}\left(  x,x\right)  \leq G\left(  x,x\right)  <+\infty$ (recall \eqref{eq:GtoGamma}). So $\left\{
P_{N}^{A}\right\}  _{N}$ is a tight sequence. But for $x,\,y\in\mathbb{Z}^{d},$
also $\lim_{N\rightarrow+\infty}\operatorname*{var}_{P_{N}^{A}}\left(
\varphi_{x}+\varphi_{y}\right)  $ exists, and therefore $\lim_{N\rightarrow
+\infty}G_{N}^{A}\left(  x,\,y\right)  $ exists. This implies the statement of
the corollary.
\end{proof}
\paragraph{Bernoulli domination.}A key step of our proof is that the
environment of pinned points can be compared with Bernoulli site percolation.
Expanding $\prod_{x\in W}\left(  \mathrm{d}\varphi_{x}+\varepsilon\delta
_{0}(\mathrm{d}\varphi_{x})\right)  $ in (\ref{def:pinned}), one has, for any
measurable function $f:\mathbb{R}^{\mathbb{Z}^{d}}\rightarrow\mathbb{R}$,
\begin{align*}
E_{W}^{\varepsilon}(f)  &  =\frac{1}{Z_{W}^{\varepsilon}}\int f(\varphi
)\exp\left(  -\frac{1}{2}\left\langle \Delta\varphi,\Delta\varphi\right\rangle
\right)  \prod_{x\in W}\left(  \mathrm{d}\varphi_{x}+\varepsilon\delta
_{0}(\mathrm{d}\varphi_{x})\right)  \prod_{x\in W^{c}}\delta_{0}%
(\mathrm{d}\varphi_{x})=\\
&  =\sum_{A\subset W}\varepsilon^{|A|}\frac{Z_{W}^{A}}{Z_{W}^{\varepsilon}%
}E_{W}^{A}(f),
\end{align*}
where $Z_{W}^{A}:=Z_{W\backslash A}$ i.e.%
\[
P_{W}^{\varepsilon}=\sum_{A\subset W}\zeta_{W}^{\varepsilon}(A)P_{W}^{A}.
\]
with
\begin{equation}
\zeta_{W}^{\varepsilon}(A):=\varepsilon^{|A|}\frac{Z_{W}^{A}}{Z_{W}%
^{\varepsilon}}, \label{eq:def_zeta}%
\end{equation}
which is a probability measure on $\mathcal{P}(W)$, the set of subsets of $W.$
We will often use $\mathcal{A}$ or $\mathcal{A}_{W}$ to denote a $\mathcal{P}(W)$-valued random
variable with this distribution, so that we can write%
\begin{equation}
E_{W}^{\varepsilon}[\varphi_{x}\varphi_{y}]=\sum_{A\subset W}\zeta
_{W}^{\varepsilon}(A)G_{W}^{A}\left(  x,\,y\right)  =E_{\zeta_{W}^{\varepsilon}%
}\left(  G_{W}^{\mathcal{A}}\left(  x,\,y\right)  \right).  \label{eq:mixture1}%
\end{equation}

\begin{lemma}
\label{lem:partition} In $d\geq5$ there exist constants $0<{C}_{-},\,{C}%
_{+}<\infty$ depending only on the dimension such that for every $w\in W$ and
$E\subset W\setminus\{w\}$
\begin{equation}
{C}_{-}\leq\frac{Z_{W}^{E\cup\left\{  w\right\}  }}{Z_{W}^{E}}\leq{C}_{+}.
\label{eq:boundZ}%
\end{equation}

\end{lemma}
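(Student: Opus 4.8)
The ratio $Z_W^{E\cup\{w\}}/Z_W^E$ compares the partition function of the membrane model on $W\setminus(E\cup\{w\})$ with the one on $W\setminus E$; the difference between the two configurations is precisely that the variable $\varphi_w$ is pinned to $0$ in the numerator and free in the denominator. The plan is to express this ratio as a Gaussian integral in the single variable $\varphi_w$, conditionally on the remaining variables. Concretely, write $P_W^E$ as the law of $\{\varphi_x\}_{x\in W\setminus E}$ and decompose $Z_W^E = \int \exp(-\frac12\langle\Delta\varphi,\Delta\varphi\rangle)\,\prod_{x\in W\setminus E}\mathrm d\varphi_x$ by singling out the integration over $\varphi_w$. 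Performing that one-dimensional Gaussian integral first, one gets
\begin{equation*}
Z_W^E = Z_W^{E\cup\{w\}}\cdot\sqrt{\frac{2\pi}{a_w}},\qquad a_w := \Delta_{W\setminus E}^2(w,w),
\end{equation*}
since $a_w$ is the coefficient of $\varphi_w^2$ in the quadratic form and, by the maximum-principle argument quoted after \eqref{Grreen_by_Bilaplacian}, $\Delta_{W\setminus E}^2$ is positive definite so $a_w>0$. Equivalently, $Z_W^{E\cup\{w\}}/Z_W^E = \sqrt{a_w/(2\pi)}$. This identity is just the statement that integrating out $\varphi_w$ produces a Gaussian normalization, and the reciprocal of the variance of $\varphi_w$ under $P_W^E$ conditionally on the rest is exactly $a_w$; equivalently $1/a_w = \operatorname{var}_{P_W^E}(\varphi_w\mid \mathcal F_{(W\setminus E)\setminus\{w\}})$, which can also be read off Proposition~\ref{prop:MP} with $A=W\setminus E$ and $B=\{w\}$, giving $1/a_w = G_{\{w\}}(w,w) = \Delta^{-2}_{\{w\}}(w,w)$.

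So the task reduces to bounding $a_w = \Delta_{W\setminus E}^2(w,w)$ above and below by dimension-dependent constants, uniformly in $W$, $E$ and $w$. The upper bound on the ratio, i.e. the upper bound $a_w\le L$, is the easy direction: $a_w = \|\Delta\delta_w\|^2$ computed on $\mathbb Z^d$ ignoring the domain, since $\Delta_{W\setminus E}^2(w,w) = \langle \Delta^2\delta_w,\delta_w\rangle = \langle\Delta\delta_w,\Delta\delta_w\rangle \le \|\Delta\delta_w\|_2^2$, and this last quantity is a finite explicit constant depending only on $d$ (it counts the nonzero entries of $\Delta$ near a site). For the lower bound $a_w\ge L_- >0$, i.e. the upper bound $Z_W^{E\cup\{w\}}/Z_W^E\le C_+$ — wait, more carefully: a large $a_w$ makes the ratio large, so the lower bound ${C}_-\le$ ratio corresponds to a lower bound on $a_w$, and the upper bound to an upper bound on $a_w$. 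The delicate inequality is the lower bound on $a_w$, equivalently the upper bound $\operatorname{var}_{P_W^E}(\varphi_w\mid\text{rest})\le 1/L_-$: the conditional variance of $\varphi_w$ given all other coordinates must be bounded above uniformly. By Corollary~\ref{Cor_variance_monotone} (variance monotonicity in the domain) this conditional variance is at most $G(w,w)=G(0,0)$, which is finite by \eqref{eq:GtoGamma} and translation invariance; hence $a_w\ge 1/G(0,0)=:{C}_-^{?}$, and by translation invariance of $\Gamma$ and $G$ this bound is uniform in $w$, $W$, $E$.

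Putting it together: set ${C}_- := \sqrt{1/(2\pi G(0,0))}$ and ${C}_+ := \sqrt{\|\Delta\delta_0\|_2^2/(2\pi)}$, both depending only on $d$; then ${C}_-\le Z_W^{E\cup\{w\}}/Z_W^E\le{C}_+$ as claimed. I expect the main subtlety to be purely bookkeeping: justifying that integrating out a single coordinate from the full Gaussian density on $W\setminus E$ yields precisely the Gaussian normalization with variance equal to the $(w,w)$-entry of the inverse covariance — this is standard for multivariate Gaussians (Schur complement / conditioning formula), but it must be phrased correctly given that $\langle\Delta\varphi,\Delta\varphi\rangle$ involves sites outside $W\setminus E$, which however contribute nothing since $\varphi$ vanishes there, so the restricted quadratic form $\langle\varphi,\Delta^2_{W\setminus E}\varphi\rangle$ is the right object. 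The identification of the lower bound with $1/G(0,0)$ via domain monotonicity, and of the upper bound with $\|\Delta\delta_0\|_2^2$ via discarding the domain restriction, are then immediate, and neither requires $d\ge 5$ beyond ensuring $G(0,0)<\infty$ — which is exactly where the dimension hypothesis enters through \eqref{eq:GtoGamma}.
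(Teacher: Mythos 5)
Your proposal contains a genuine error at its central step. Integrating out the single coordinate $\varphi_w$ from $\exp\bigl(-\tfrac12\langle\varphi,\Delta^2_{W\setminus E}\varphi\rangle\bigr)$ does \emph{not} give $Z_W^{E\cup\{w\}}\sqrt{2\pi/a_w}$ with $a_w=\Delta^2_{W\setminus E}(w,w)$: completing the square in $\varphi_w$ leaves behind a factor $\exp\bigl(\bigl(\sum_{x\neq w}\Delta^2(w,x)\varphi_x\bigr)^2/(2a_w)\bigr)$ depending on the remaining variables, so what remains is the Gaussian integral for the Schur complement of $\Delta^2_{W\setminus E}$ in the $w$ entry, not for $\Delta^2_{W\setminus(E\cup\{w\})}$ --- pinning $\varphi_w=0$ is not the same as integrating it out. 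Equivalently, you have conflated the \emph{conditional} variance $1/a_w=\operatorname{var}_{P_W^E}(\varphi_w\mid\mathcal F_{(W\setminus E)\setminus\{w\}})=G_{\{w\}}(w,w)$ with the \emph{marginal} variance $G_W^E(w,w)=(\Delta^2_{W\setminus E})^{-1}(w,w)$; the determinant ratio that actually controls $Z_W^{E\cup\{w\}}/Z_W^E$ is $\det\Delta^2_{W\setminus E}/\det\Delta^2_{W\setminus(E\cup\{w\})}=1/G_W^E(w,w)$, not $a_w$. A quick sanity check exposes the problem: $a_w=\Delta^2(w,w)$ is an explicit constant depending only on $d$, so your identity would make the ratio a fixed constant, valid in every dimension with $C_-=C_+$ and no role for $d\geq5$ --- contradicting the remark following the lemma, where the lower bound degenerates like $1/\sqrt{\log N}$ in $d=4$.

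The correct identification --- and the paper's one-line proof --- is that $Z_W^{E\cup\{w\}}/Z_W^E$ is the density at $0$ of the marginal law of $\varphi_w$ under $P_W^E$ (setting $\varphi_w=0$ in the numerator of that density turns it into $Z_W^{E\cup\{w\}}$), hence equals $1/\sqrt{2\pi G_W^E(w,w)}$, with the marginal variance appearing. Then Corollary~\ref{Cor_variance_monotone} gives $0<G_{\{w\}}(w,w)\leq G_W^E(w,w)\leq G(w,w)=G(0,0)<+\infty$, the finiteness being exactly where $d\geq5$ enters via \eqref{eq:GtoGamma}. Notably, the constants you wrote at the end, $C_-=1/\sqrt{2\pi G(0,0)}$ and $C_+=\sqrt{\Delta^2(0,0)/(2\pi)}$, are precisely those produced by this corrected argument; what your proof is missing is the step identifying the ratio with the \emph{marginal}, rather than conditional, Gaussian normalization, and your appeal to variance monotonicity is applied to the wrong quantity (the conditional variance is already the constant $G_{\{w\}}(w,w)$ and needs no bound).
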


\begin{proof} The proof follows the ideas of \citet[Section 5.3]{velenikloc}.
$Z_{W}^{E\cup\left\{  w\right\}  }/Z_{W}^{E}$ is the density at $0$ of the
distribution of $\varphi_{w}$ under the law $P_{W}^{E}$, i.e.%
\[
\frac{Z_{W}^{E\cup\left\{  w\right\}  }}{Z_{W}^{E}}=\frac{1}{\sqrt{2\pi
G_{W}^{E}\left(  w,w\right)  }}.
\]
As%
\[
0<G_{\left\{  w\right\}  }\left(  w,w\right)  \leq G_{W}^{E}\left(
w,w\right)  \leq G\left(  w,w\right)  =G\left(  0,0\right)  <+\infty,
\]
the claim follows.
\end{proof}

\begin{remark}
For $d=2,\,3,\,4,$ one has a similar upper bound for $Z_{W}^{E\cup\left\{
w\right\}  }/Z_{W}^{E}$, but the lower bound depends on $W,$ as $G\left(
0,0\right)  =+\infty.$ For $d=4,$ one has, for $W:=V_N$,%
\[
\frac{Z_{N}^{E\cup\left\{  w\right\}  }}{Z_{N}^{E}}\geq\frac{C_{-}}{\sqrt{\log
N}}.
\]

\end{remark}

We control now the pinning measure $\zeta_{N}^{\varepsilon}$ through
dominations by Bernoulli product measures.

\begin{definition}[Strong stochastic domination]
\label{Def_Domination}Given two probability measures $\mu$ and $\nu$ on the
set $\mathcal{P}(W)$, $\left\vert W\right\vert <+\infty$, we say
that $\mu$ \emph{dominates} $\nu$ \emph{strongly stochastically} if for all $x\in W$,
$E\subset W\setminus\{x\}$,
\begin{equation}
\mu(A:\,x\in A\,|\,A\setminus\{x\}=E)\geq\nu(A:\,x\in A\,|\,A\setminus
\{x\}=E).\label{eq:ssd}%
\end{equation}
When this holds we write $\mu\succ\nu$.
\end{definition}

Let $\mathbb{P}_{W}^{\rho}$ be the Bernoulli site percolation measure on $W$
with intensity $\rho\in\left[  0,1\right]  .$ We regard this as a probability
measure on $\mathcal{P}\left(  W\right)  .$

\begin{proposition}
\label{prop:dom} Let $d\geq5$ and $\varepsilon>0$. Then%
\[
\mathbb{P}_{W}^{\rho_{-}(d,\varepsilon)}\prec\zeta_{W}^{\varepsilon}%
\prec\mathbb{P}_{W}^{\rho_{+}(d,\varepsilon)}%
\]
with%
\begin{equation}
\rho_{\pm}\left(  d,\varepsilon\right)  :=\frac{C_{\pm}\left(  d\right)
\varepsilon}{1+C_{\pm}\left(  d\right)  \varepsilon}\in\left(  0,1\right)
\label{Def_rho}%
\end{equation}
where $C_{-},\,C_{+}$ are defined in Lemma~\ref{lem:partition}.
\end{proposition}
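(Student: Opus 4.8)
The plan is to reduce the statement to Lemma~\ref{lem:partition} via an explicit computation of the one-site conditional probabilities of $\zeta_W^\varepsilon$, and then to note that for Bernoulli percolation these conditional probabilities are constant, equal to the intensity.

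First I would fix $x\in W$ and $E\subset W\setminus\{x\}$ and compute, directly from the definition \eqref{eq:def_zeta},
\[
\zeta_W^\varepsilon\big(A:\,x\in A\,\big|\,A\setminus\{x\}=E\big)
=\frac{\zeta_W^\varepsilon(E\cup\{x\})}{\zeta_W^\varepsilon(E)+\zeta_W^\varepsilon(E\cup\{x\})}
=\frac{\varepsilon\,Z_W^{E\cup\{x\}}/Z_W^E}{1+\varepsilon\,Z_W^{E\cup\{x\}}/Z_W^E},
\]
where the common factor $\varepsilon^{|E|}$ and the normalisation $Z_W^\varepsilon$ cancel, and where one uses that conditioning on $A\setminus\{x\}=E$ leaves $\zeta_W^\varepsilon$ supported on exactly the two sets $E$ and $E\cup\{x\}$.

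Next, since $t\mapsto t/(1+t)$ is strictly increasing on $[0,\infty)$ and, by Lemma~\ref{lem:partition}, $C_-\le Z_W^{E\cup\{x\}}/Z_W^E\le C_+$, the right-hand side lies between $\tfrac{C_-\varepsilon}{1+C_-\varepsilon}=\rho_-(d,\varepsilon)$ and $\tfrac{C_+\varepsilon}{1+C_+\varepsilon}=\rho_+(d,\varepsilon)$, uniformly in $x$, in $E$, and in $W$; both quantities lie in $(0,1)$ because $C_\pm\varepsilon\in(0,\infty)$. Finally, for the product measure $\mathbb{P}_W^\rho$ one has $\mathbb{P}_W^\rho(A:\,x\in A\,|\,A\setminus\{x\}=E)=\rho$ for every $E$, by independence of the sites; comparing with the two-sided bound just obtained and recalling Definition~\ref{Def_Domination} gives $\mathbb{P}_W^{\rho_-}\prec\zeta_W^\varepsilon\prec\mathbb{P}_W^{\rho_+}$.

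There is no genuine obstacle in this argument: the entire content sits in the partition-function estimate of Lemma~\ref{lem:partition}, which is exactly where the hypothesis $d\ge5$ enters (for $d\le4$ the lower constant $C_-$ degenerates, as recorded in the remark following that lemma), while the rest is the elementary monotonicity of $t\mapsto t/(1+t)$ together with the defining property of Bernoulli product measures.
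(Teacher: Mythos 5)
Your proof is correct and follows essentially the same route as the paper: the paper's (one-line) proof computes the same single-site conditional probability $\zeta_W^\varepsilon(A:\,x\in A\,|\,A\setminus\{x\}=E)=\bigl[1+\tfrac{1}{\varepsilon}Z_W^E/Z_W^{E\cup\{x\}}\bigr]^{-1}$, which is exactly your expression, and then invokes Lemma~\ref{lem:partition}. Your write-up merely makes explicit the cancellation of $\varepsilon^{|E|}$ and $Z_W^\varepsilon$, the monotonicity of $t\mapsto t/(1+t)$, and the constancy of the Bernoulli conditionals, all of which the paper leaves implicit.
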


\begin{proof}
For $x,\,E$ as in Definition \ref{Def_Domination}%
\[
\zeta_{W}^{\varepsilon}(A:\,x\in A\,|\,A\setminus\{x\}=E)=\left[
1+\frac{1}{\varepsilon}\frac{Z_{W}^{E}}{Z_{W}^{  E\cup\left\{  x\right\}
  }}\right]  ^{-1}.
\]
This proves the claim.
\end{proof}

\section{Proof of the main result\label{sec:main}}

\subsection{Sobolev norms\label{Subsect_Sobolev}}

A crucial role of the proof uses a Sobolev-type norm $\left\Vert
\cdot\right\Vert _{A,E}$ depending on subsets $A\subset E\subset\mathbb{Z}^{d}$.
Given $A$, let
\[
\widehat{A}:=\left\{  x\in A:x+e\in A,\,\text{for all } \,e\right\}  .
\]
$\widehat{A}$ is the subset of \textquotedblleft interior\textquotedblright%
\ points of $A$. For $f:\mathbb{Z}^{d}\rightarrow\mathbb{R}$ and
$A\subset E\subset\mathbb{Z}^{d}$, let
\begin{equation}
\Vert f\Vert_{A,\,E}^{2}:=\sum_{x\in E}\frac{f(x)^{2}}{1+\mathrm{d}%
(x,\,\widehat{A})^{2d+3}}+\sum_{x\in E}\frac{\left\Vert \nabla f\left(
x\right)  \right\Vert ^{2}}{1+\mathrm{d}(x,\,\widehat{A})^{d+2}}+\sum_{x\in
E}\left\Vert \nabla^{2}f\left(  x\right)  \right\Vert ^{2}.
\label{eq:def_norm}%
\end{equation}
If $\widehat{A}=\emptyset,$ then we put $\mathrm{d}(x,\,\widehat{A})=+\infty$
by convention, and $\Vert f\Vert_{A,\,E}^{2}=\sum_{x\in E}\left\Vert
\nabla^{2}f\left(  x\right)  \right\Vert ^{2}.$ We note the following two facts:

\begin{enumerate}
\item $\|f\|^{2}_{A,\,E}$ is defined for $f:\,E\cup\partial_{2} E\to\mathbb{R}
$.

\item If $E_{1}$ and $E_{2}$ are disjoint then
\[
\Vert f\Vert_{A,\,E_{1}\cup E_{2}}^{2}=\Vert f\Vert_{A,\,E_{1}}^{2}+\Vert
f\Vert_{A,\,E_{2}}^{2}.
\]

\end{enumerate}

{When $E:=\Z^d$ and we randomize $A$ thinking of it as the set of pinned points, we will use this norm as the random Sobolev norm equivalent to $\|\nabla^2\cdot\|_{L^2}$}. We now bound the $\Vert\cdot\Vert_{A,\,\mathbb{Z}^{d}}^{2}$ norm of a function
vanishing on $A$ by second derivates only.

\begin{lemma}
\label{lem:equivalence} Let $f$ be a function which is identically zero on
$A$. Then%
\[
\Vert f\Vert_{A,\,\Z^d}^{2}\leq L\sum_{x\in \Z^d}\left\Vert \nabla^{2}f\left(
x\right)  \right\Vert ^{2}.
\]

\end{lemma}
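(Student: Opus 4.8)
The plan is to run a discrete Hardy-type estimate. The third sum in $\Vert f\Vert_{A,\,\Z^d}^2$ is already the asserted right-hand side, so all the work is to bound the first two sums by $L\sum_{x}\Vert\nabla^2f(x)\Vert^2$. The mechanism is iterated summation by parts along geodesic paths joining each site to $\widehat A$, exploiting that on $\widehat A$ not only $f$ but also all its first differences vanish: for $z\in\widehat A$ we have $f(z)=0$ because $z\in A$, and $D_ef(z)=f(z+e)-f(z)=0$ for every $e$ because $z+e\in A$; thus $\nabla f$ vanishes on $\widehat A$ as well.

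First I would treat the trivial case $\widehat A=\emptyset$, where the first two sums vanish by convention. Otherwise, for each $x$ set $r(x):=\mathrm{d}(x,\,\widehat A)$, fix a nearest point $z(x)\in\widehat A$ and a geodesic $\pi(x)\colon x=x_0,x_1,\dots,x_{r(x)}=z(x)$ with unit steps $e_j:=x_{j+1}-x_j$. Telescoping $D_ef$ along $\pi(x)$ and using $D_ef(z(x))=0$ gives $D_ef(x)=-\sum_{j=0}^{r(x)-1}D_{e_j}D_ef(x_j)$, so summing the squares over the $2d$ directions $e$ and applying Cauchy--Schwarz yields
\[
\Vert\nabla f(x)\Vert^2\le L\,r(x)\sum_{y\in\pi(x)}\Vert\nabla^2f(y)\Vert^2 .
\]
Telescoping $f$ itself, and then telescoping each $D_{e_j}f(x_j)$ along the tail $x_j,\dots,x_{r(x)}$ of $\pi(x)$ (using $\nabla f(z(x))=0$), gives $f(x)=\sum_{0\le j\le k\le r(x)-1}D_{e_k}D_{e_j}f(x_k)$, hence $|f(x)|\le\sum_{k}(k+1)\Vert\nabla^2f(x_k)\Vert_\infty$ and, by Cauchy--Schwarz together with $\Vert\nabla^2f\Vert_\infty\le\Vert\nabla^2f\Vert_2$,
\[
f(x)^2\le L\,r(x)^3\sum_{y\in\pi(x)}\Vert\nabla^2f(y)\Vert^2 .
\]

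Then I would exchange the order of summation:
\[
\sum_x\frac{f(x)^2}{1+r(x)^{2d+3}}\le L\sum_y\Vert\nabla^2f(y)\Vert^2\sum_{x:\,y\in\pi(x)}\frac{r(x)^3}{1+r(x)^{2d+3}},
\]
and analogously for the $\nabla f$-sum with the exponents $1$ and $d+2$; it then suffices to bound the inner sums uniformly in $y$. The decisive point is that whenever $y\in\pi(x)$ one has the exact identity $\mathrm{d}(x,\,y)=r(x)-r(y)$, obtained by combining $\mathrm{d}(x,\,y)+\mathrm{d}(y,\,z(x))=r(x)$ with $\mathrm{d}(y,\,z(x))\ge r(y)$ and with the always-valid $r(x)\le\mathrm{d}(x,\,y)+r(y)$. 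Consequently, for each $t\ge0$ the sites $x$ with $y\in\pi(x)$ and $\mathrm{d}(x,\,y)=t$ lie on the $\ell_1$-sphere of radius $t$ about $y$ (at most $L(1+t^{d-1})$ of them) and all satisfy $r(x)=t+r(y)$; substituting this turns each inner sum into an absolutely convergent series whose value is bounded uniformly in $r(y)$ (it is dominated, after estimating the denominators from below, by a series of type $\sum_t(1+t)^{-1-d}$ resp.\ $\sum_t(1+t)^{-2}$). Adding the three contributions gives the claim.

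The hard part will be this final bookkeeping rather than the summations by parts: a crude estimate that replaces the relevant sphere by the full $\ell_1$-ball of radius $r(x)$ would make the $\nabla f$-contribution diverge (logarithmically), so one genuinely needs the identity $\mathrm{d}(x,\,y)=r(x)-r(y)$ together with the precise exponents $2d+3$ and $d+2$ that have been tuned into the definition of $\Vert\cdot\Vert_{A,\,E}$.
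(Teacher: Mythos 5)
Your proof is correct and follows essentially the paper's strategy: telescoping along geodesic paths to $\widehat A$ (using that both $f$ and $\nabla f$ vanish there), Cauchy--Schwarz, exchanging the order of summation, and counting the sites whose path passes through a fixed point. The only structural difference is that you bound the zeroth-order sum directly by the second derivatives via a double telescoping, making the identity $\mathrm{d}(x,\,y)=\mathrm{d}(x,\,\widehat A)-\mathrm{d}(y,\,\widehat A)$ for $y\in\pi(x)$ explicit, whereas the paper first dominates it by the weighted gradient sum and then dominates that by $\sum_x\left\Vert \nabla^{2}f\left(  x\right)\right\Vert^{2}$; the exponent bookkeeping works out either way.
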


\begin{proof}
There is nothing to prove when $\widehat{A}=\emptyset,$ so we assume
$\widehat{A}\neq\emptyset.$

We first show that the first summand on the right hand side of
(\ref{eq:def_norm}) is dominated by a multiple of the second, and afterwards
that the second is dominated by the third.

If $x\in\mathbb{Z}^{d}$, we choose a nearest-neighbor path $\psi_{x}$ of
shortest length $|\psi_x|:=k+1$ to $\widehat{A}$, that is, $\psi_{x}=\left(  x_{0}%
=x,x_{1},\ldots,x_{k}\right)  $ with $x_{k}\in\widehat{A}$. As $f$ is $0$ on
$A$, one has
\[
f(x)=\sum_{l=1}^{k}(f(x_{l-1})-f(x_{l})).
\]
We can choose the collection $\left\{  \psi_{x}\right\}  $ of paths in such a
way that the same bond is not used for two different end points in
$\widehat{A}$. More formally: if $x,x^{\prime}\in\mathbb{Z}^{d}$ with paths
$\psi_{x}=\left(  x,x_{1},\ldots,x_{k}\right)  ,\ \psi_{x^{\prime}}=\left(
x^{\prime},x_{1}^{\prime},\ldots,x_{k^{\prime}}^{\prime}\right)  $ have the
property that there exists a bond $b$ which belongs to both paths, then
$x_{k}=x_{k^{\prime}}^{\prime}.$ This can be achieved by choosing an
enumeration $\left\{  x_{n}\right\}  $ of $\mathbb{Z}^{d},$ and constructing
the paths recursively with this property.

By Cauchy-Schwarz,
\[
f(x)^{2}\leq\left\vert \psi_{x}\right\vert \sum_{l=1}^{k}(f(x_{l}%
)-f(x_{l-1}))^{2}=\mathrm{d}(x,\widehat{A})\sum_{l=1}^{\mathrm{d}%
(x,\widehat{A})}(f(x_{l})-f(x_{l-1}))^{2},
\]
and thus, exchanging the order of summation between points $x$ and paths
$\psi_{x}$,
\begin{align}
\sum_{x}\frac{f\left(  x\right)  ^{2}}{1+\mathrm{d}(x,\widehat{A})^{2d+3}}  &
\leq\sum_{x}\frac{\mathrm{d}(x,\widehat{A})}{1+\mathrm{d}(x,\widehat
{A})^{2d+3}}\sum_{l=1}^{\mathrm{d}(x,\widehat{A})}(f(x_{l})-f(x_{l-1}%
))^{2}\nonumber\\
&  \leq\sum_{z}\left\Vert \nabla f\left(  z\right)  \right\Vert ^{2}%
\sum_{x:z\in\psi_{x}}\frac{\mathrm{d}(x,\widehat{A})}{1+\mathrm{d}%
(x,\widehat{A})^{2d+3}}. \label{eq:use_two}%
\end{align}
For $z\in\mathbb{Z}^{d}$ write $R_{z,k}:=\{x\in\mathbb{Z}^{d}:\mathrm{d}%
(x,\widehat{A})=k\;\text{and}\;z\in\psi_{x}\}.$ {Observe that every $x\in\mathbb{Z}^{d}$ with $z\in\psi_{x}$ satisfies
$\mathrm{d}(x,\widehat{A})\geq\mathrm{d}(z,\widehat{A})$. Notice also that if $z\in \psi_x$, the path $\psi_x$ cannot take less than $\De (z,\,\widehat A)$ steps to reach $\widehat A$ from $z$ (otherwise $\De(z,\,\widehat A)$ would not be minimal). Thus $R_{z,\,k}$ can be bounded by the volume of a ball around $z$, namely,} there exists a
constant $c_{1}=c_{1}(d)$ such that $\left\vert R_{z,k}\right\vert \leq
c_{1}(k-\mathrm{d}(z,\widehat{A}))^{d-1}\leq c_{1}k^{d-1}.$ Therefore
\begin{align}
\sum_{x:z\in\psi_{x}}\frac{\mathrm{d}(x,\widehat{A})}{1+\mathrm{d}%
(x,\widehat{A})^{2d+3}}  &  \leq\sum_{k=\mathrm{d}(z,\widehat{A})}^{\infty
}\frac{|R_{z,k}|}{1+k^{2d+2}}\nonumber\\
&  \leq L\sum_{k=\mathrm{d}(z,\widehat{A})}^{\infty}\frac{1}{1+k^{d+3}}\leq
L\frac{1}{1+\mathrm{d}(z,\widehat{A})^{d+2}}. \label{eq:use_one}%
\end{align}
Thus we have, plugging \eqref{eq:use_one} in \eqref{eq:use_two},%
\[
\sum_{x}\frac{f(x)^{2}}{1+\mathrm{d}(x,\,\widehat{A})^{2d+3}}\leq L\sum
_{x}\frac{1}{1+\mathrm{d}(x,\,\widehat{A})^{d+2}}\left\Vert \nabla f\left(
x\right)  \right\Vert ^{2}.
\]
It remains to prove that the right hand side is bounded by some multiple
of $\sum_{x}\left\Vert \nabla^{2}f\left(  x\right)  \right\Vert ^{2}.$ If
$\psi_{x}$ is the same as above, we have%
\[
\nabla f(x)=\sum_{l=1}^{k}(\nabla f(x_{l-1})-\nabla f(x_{l})),
\]
because $\nabla f(x_{k})=0$ component-wise for $x_{k}\in\widehat{A}$. By the same
arguments as above we get
\[
\left\Vert \nabla f\left(  x\right)  \right\Vert ^{2}\leq\mathrm{d}%
(x,\widehat{A})\sum_{l=1}^{\mathrm{d}(x,\widehat{A})}\left\vert \nabla\left[
f(x_{l})-f(x_{l-1})\right]  \right\vert ^{2},
\]
and
\begin{align*}
&  \sum_{x}\frac{\left\Vert \nabla
f\left(  x\right)  \right\Vert ^{2}}{1+\mathrm{d}(x,\widehat{A})^{d+2}}\leq\sum_{x\in\mathbb{Z}^{d}}%
\frac{\mathrm{d}(x,\widehat{A})}{1+\mathrm{d}(x,\widehat{A})^{d+2}}\sum
_{l=1}^{k}\left\Vert \nabla\left[  f\left(  x_{l}\right)  -f\left(
x_{l-1}\right)  \right]  \right\Vert ^{2}\\
&  \leq L\sum_{z}\left\Vert \nabla^{2}f\left(  z\right)  \right\Vert ^{2}%
\sum_{x:z\in\psi_{x}}\frac{1}{1+\mathrm{d}(x,\widehat{A})^{d+1}}\leq L\sum
_{z}\left\Vert \nabla^{2}f\left(  z\right)  \right\Vert ^{2}\left[  \sup
_{y\in\widehat{A}}\sum_{x}\frac{1}{1+\mathrm{d}(x,\,y)^{d+1}}\right] \\
&  \leq L\sum_{z}\left\Vert \nabla^{2}f\left(  z\right)  \right\Vert ^{2}.
\end{align*}

\end{proof}

For $k\geq0$ and $E\subset\mathbb{Z}^{d}$ let
\[
\upsilon_{k}\left(  E\right)  :=\left\{  x:\mathrm{d}\left(  x,E\right)  \leq
k\right\}  .
\]
For $x,\,y\in\mathbb{Z}^{d}$ let $\Gamma_{x,\,y}$ be the set of non-intersecting
nearest-neighbor paths
\[
\psi=\left(  x_{0}=x,x_{1},\ldots,x_{n}=y\right)  ,
\]
and we write $\ell\left(  \psi\right)  $ for the length $n.$ For such a $\psi$
we define
\begin{equation}
\phi_{A}\left(  \psi\right)  :=\sum_{i=0}^{n}q_{A}\left(  x_{i}\right)  ,
\label{Def_path_weight}%
\end{equation}
where
\[
q_{A}\left(  x\right)  :=\frac{1}{1+\mathrm{d}\left(  x,\widehat{A}\right)
^{2d+3}},\quad x\in\mathbb{Z}^{d}.
\]
Define
\begin{align*}
\widehat{\mathrm{d}}_{A}\left(  x,\,y\right)   &  :=\min\left\{  \phi_{A}\left(
\psi\right)  :\psi\in\Gamma_{x,\,y}\right\}  ,\\
\widehat{\mathrm{d}}_{A}(0,\,0)  &  :=0.
\end{align*}
{$\widehat d_A$ is defined in such a way that the shortest weight $\phi_A$ is achieved by staying far off pinned points.}. See Figure~\ref{fig:hatDistance} for a $2$-dimensional example of $\phi_A$ for paths between a point $y$ and the origin.

$\widehat{\mathrm{d}}_{A}$ may well be bounded, for instance if $A$ is a
finite set. In the cases we are interested in, it will however be unbounded. We will often just write $\widehat{\mathrm{d}}$ if it is clear from the
context what set $A$ is considered. Since $q_{A}(x)\leq1$ for any $x$, note also the bound $\widehat{\mathrm{d}%
}(x,\,y)\leq\mathrm{d}(x,\,y)$ for all $x,\,y\in\mathbb{Z}^{d}$.
\begin{figure}[ht!]
\includegraphics[scale=1.2]{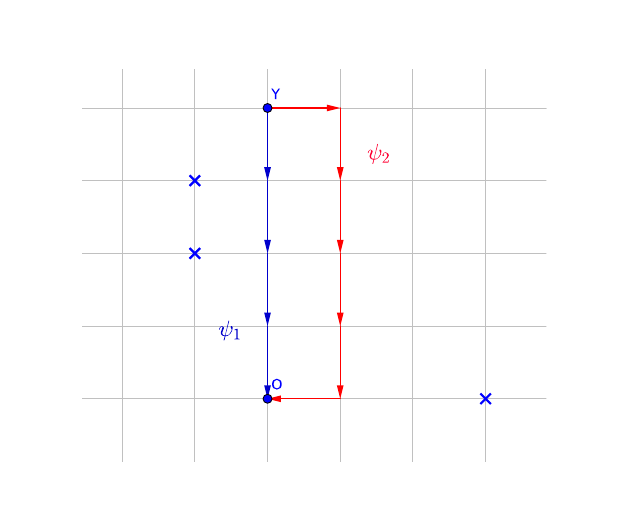}
\caption{Crosses represent pinned points. Note that $\psi_1$ is smaller in graph distance than  $\psi_2$. However, since $\psi_1$ is closer to pinned points, $1.015\approx \phi_A(\psi_1)>\phi_A(\psi_2)\approx 0.032$. The $\widehat\De_A$-distance between $y$ and $O$ is then achieved minimizing $\phi_A$ over all paths.}
\label{fig:hatDistance}
\end{figure}

We define%
\begin{equation}
C_{n}:=\left\{  x:\,\widehat{\mathrm{d}}\left(  0,x\right)  \leq10n\right\}  .
\label{eq:C_n}%
\end{equation}
$C_{n}$ is connected in the usual graph structure of $\mathbb{Z}^{d},$ but the
complement may be disconnected. If we want to emphasize the dependence of $C_{n}$ on $A$, we write $C_{n,A}.$ ]textcolor{red}{Note that the fewer the pinned points $\widehat A$ in a region, the larger the sets $C_n$ are.}

\begin{remark}\label{rem:intersection_empty}
Remark that $\upsilon_{2}(C_{n})\cap\upsilon_{2}(C_{n+1}^{\c})=\emptyset$. In
fact, assuming that there is a $w\in\upsilon_{2}(C_{n})\cap\upsilon
_{2}(C_{n+1}^{\c})$, then there exist 
\begin{equation}
\label{eq:contradict}
w_{1}\in C_{n},\quad w_{2}\in
C_{n+1}^{\c}
\end{equation}
with $\mathrm{d}(w,\,w_{i})\leq2$ for $i=1,\,2$. Hence
$\widehat{\mathrm{d}}(w_{1},\,w_{2})\leq\mathrm{d}(w_{1},\,w_{2})\leq 4$
by the triangle inequality for the graph distance, which contradicts \eqref{eq:contradict}. 
\end{remark}

We will need a monotonicity property in the dependence on $A$. First remark
that if $A\subset A^{\prime}$ then $\mathrm{d}\left(  x,\widehat{A}\right)
\geq\mathrm{d}\left(  x,\widehat{A^{\prime}}\right)  $ for all $x$, and
therefore%
\begin{equation}
\widehat{\mathrm{d}}_{A}
\leq\widehat{\mathrm{d}}_{A^{\prime}}. \label{monoton1}%
\end{equation}

\begin{lemma}
\label{lem:interpolation}For every $n$, there exists a function $\eta
_{n}:\mathbb{Z}^{d}\rightarrow\left[  0,1\right]  $ with the following properties:
\begin{equation}
\eta_{n}=0\ \mathrm{on\ }\upsilon_{2}\left(  C_{n}\right)  ,\ \eta
_{n}=1\ \mathrm{on\ }\upsilon_{2}\left(  C_{n+1}^{\c}\right),  \label{prop_eta1}%
\end{equation}%
\begin{equation}
\left\Vert \nabla\eta_{n}\left(  x\right)  \right\Vert _{\infty}\leq \frac
{L}{1+\mathrm{d}(x,\widehat{A})^{2d+3}},\ \forall \,x\in\mathbb{Z}^{d}.
\label{prop_eta2}%
\end{equation}

\end{lemma}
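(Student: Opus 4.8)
The plan is to construct $\eta_n$ directly as a function of the weighted distance $\widehat{\mathrm{d}}=\widehat{\mathrm{d}}_A$, exploiting the fact that $q_A(x)=1/(1+\mathrm{d}(x,\widehat A)^{2d+3})$ is exactly the weight controlling the increment of $\widehat{\mathrm{d}}$ along a single bond. First I would note that, since a nearest-neighbour step from $x$ to $x\pm e$ contributes a term $q_A(x\pm e)$ to any path weight $\phi_A$, the triangle-type inequality
\[
\bigl|\widehat{\mathrm{d}}(0,x)-\widehat{\mathrm{d}}(0,x')\bigr|\leq L\,\bigl(q_A(x)+q_A(x')\bigr)
\]
holds whenever $\mathrm{d}(x,x')=1$: indeed, appending the bond $\{x,x'\}$ to an optimal path to $x$ gives a path to $x'$ of weight at most $\widehat{\mathrm{d}}(0,x)+q_A(x')$ (and $q_A(x)\le L\,q_A(x')$ on neighbouring points, since $\mathrm{d}(x,\widehat A)$ and $\mathrm{d}(x',\widehat A)$ differ by at most $1$, so the two bounds are comparable up to a dimension-dependent constant). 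This Lipschitz-type estimate with respect to the weights is the main mechanism.

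Next I would define $\eta_n(x):=\chi\bigl(\widehat{\mathrm{d}}(0,x)\bigr)$ where $\chi:\mathbb R\to[0,1]$ is a smooth cutoff, equal to $0$ on $(-\infty,10n+c_0]$ and equal to $1$ on $[10(n+1)-c_0,\infty)$, with $|\chi'|\le L$; here $c_0$ is a dimension-dependent constant chosen large enough to absorb the "$\pm2$" thickening in $\upsilon_2$. To verify \eqref{prop_eta1}: if $x\in\upsilon_2(C_n)$ then there is $w_1\in C_n$ with $\mathrm{d}(x,w_1)\le2$, so $\widehat{\mathrm{d}}(0,x)\le\widehat{\mathrm{d}}(0,w_1)+\mathrm{d}(w_1,x)\le10n+2$, hence (for $c_0\ge 2$) $\chi=0$ there; symmetrically, $x\in\upsilon_2(C_{n+1}^{\c})$ forces $\widehat{\mathrm{d}}(0,x)\ge10(n+1)-2$ (using that $\widehat{\mathrm{d}}$ cannot jump by more than $1$ per step, so a point within graph distance $2$ of the complement of $C_{n+1}$ has $\widehat{\mathrm{d}}$-value at least $10(n+1)-2$), giving $\chi=1$. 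For \eqref{prop_eta2}: for any neighbour $x+e$ of $x$,
\[
\bigl|\eta_n(x+e)-\eta_n(x)\bigr|=\bigl|\chi(\widehat{\mathrm{d}}(0,x+e))-\chi(\widehat{\mathrm{d}}(0,x))\bigr|\leq L\,\bigl|\widehat{\mathrm{d}}(0,x+e)-\widehat{\mathrm{d}}(0,x)\bigr|\leq L\,q_A(x)=\frac{L}{1+\mathrm{d}(x,\widehat A)^{2d+3}},
\]
using $|\chi'|\le L$ and the Lipschitz estimate above (and comparability of $q_A$ on neighbouring points). Taking the sup over $e$ gives $\|\nabla\eta_n(x)\|_\infty\le L/(1+\mathrm{d}(x,\widehat A)^{2d+3})$ as required.

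The step I expect to be the main obstacle is making the Lipschitz-in-weights estimate $|\widehat{\mathrm{d}}(0,x+e)-\widehat{\mathrm{d}}(0,x)|\le L\,q_A(x)$ completely rigorous, in particular handling the possibility that an optimal path for one endpoint passes very close to pinned points: one must argue that the single extra bond contributes a weight comparable to $q_A(x)$ and not something larger, which needs the observation that $\mathrm{d}(\cdot,\widehat A)$ changes by at most $1$ along a bond and that $t\mapsto 1/(1+t^{2d+3})$ is, on consecutive integers, comparable up to a constant. A minor related point is the degenerate case $\widehat A=\emptyset$, where $\widehat{\mathrm{d}}\equiv$ graph distance and $q_A\equiv$ a constant multiple; there one simply takes the usual linear interpolation between $\upsilon_2(C_n)$ and $\upsilon_2(C_{n+1}^{\c})$, whose gradient is bounded by a constant, consistent with the right-hand side of \eqref{prop_eta2} since $1/(1+\mathrm{d}(x,\widehat A)^{2d+3})$ is then interpreted as $\ge$ a fixed positive constant by the $\mathrm{d}(x,\widehat A)=+\infty$ convention — so the bound is trivially satisfied and no contradiction arises. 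Everything else is bookkeeping with the definitions of $\upsilon_k$, $C_n$, and $\phi_A$.
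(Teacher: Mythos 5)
Your construction is correct, but it is genuinely different from the paper's. The paper sets $f_{1}(x):=\widehat{\mathrm{d}}(x,\upsilon_{2}(C_{n}))$, $f_{2}(x):=\widehat{\mathrm{d}}(x,\upsilon_{2}(C_{n+1}^{\c}))$ and takes $\eta_{n}=f_{1}/(f_{1}+f_{2})$; property \eqref{prop_eta1} is then automatic, the denominator is bounded below by $1$ thanks to Remark~\ref{rem:intersection_empty}, and \eqref{prop_eta2} comes from a discrete quotient-rule computation together with the bound $|D_{e}f_{i}(x)|\le q_{A}(x)+q_{A}(x+e)$. You instead compose a scalar Lipschitz cutoff with the radial weighted distance $\widehat{\mathrm{d}}(0,\cdot)$; your key ingredients — the bond-Lipschitz estimate $|\widehat{\mathrm{d}}(0,x+e)-\widehat{\mathrm{d}}(0,x)|\le q_{A}(x)+q_{A}(x+e)\le L\,q_{A}(x)$ (handled correctly via appending/truncating a near-optimal self-avoiding path and the fact that $\mathrm{d}(\cdot,\widehat A)$ changes by at most $1$ per bond) and the value separation $\widehat{\mathrm{d}}(0,\cdot)\le 10n+2$ on $\upsilon_{2}(C_{n})$ versus $\ge 10(n+1)-2$ on $\upsilon_{2}(C_{n+1}^{\c})$ — are exactly the same facts that drive the paper's proof (note your separation argument is the quantitative version of Remark~\ref{rem:intersection_empty}, exploiting that the level gap $10$ exceeds the $\pm 2$ thickening). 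What your route buys is that the gradient bound is an immediate composition estimate with no quotient rule; what the paper's route buys is that \eqref{prop_eta1} needs no value estimates at all. One small inaccuracy in your side remark: under the paper's convention, $\widehat A=\emptyset$ gives $\mathrm{d}(x,\widehat A)=+\infty$, hence $q_{A}\equiv 0$, $\widehat{\mathrm{d}}\equiv 0$, $C_{n}=\mathbb{Z}^{d}$ and $\upsilon_{2}(C_{n+1}^{\c})=\emptyset$ — so the right-hand side of \eqref{prop_eta2} is $0$, not bounded below by a positive constant, and the lemma holds trivially because your $\eta_{n}\equiv 0$ there; this does not affect the argument, and in the application $A^{\c}$ is finite so the case does not arise.
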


\begin{proof}
Let $f_{1}(x):=\widehat{\mathrm{d}}(x,\upsilon_{2}(C_{n}))$ and $f_{2}%
(x):=\widehat{\mathrm{d}}(x,\upsilon_{2}(C_{n+1}^{\c})).$ We define
\[
\eta_{n}(x):=\frac{f_{1}(x)}{f_{1}(x)+f_{2}(x)}.
\]
which evidently satisfies (\ref{prop_eta1}).

To prove (\ref{prop_eta2}), notice first that one can find an $L$ large uniformly for all $x$ with $\mathrm{d}(x,\widehat{A})\le 4,$ so let us consider
$x$ such that $\mathrm{d}(x,\widehat{A})\geq5.$ We have from Remark~\ref{rem:intersection_empty} that
\[
f_{1}(x)+f_{2}(x)\geq1.
\]%
Then
\begin{align}
\left\vert D_{e}\eta_{n}\left(  x\right)  \right\vert  &  \leq\frac{\left\vert
D_{e}f_{1}\left(  x\right)  \right\vert }{f_{1}\left(  x\right)  +f_{2}\left(
x\right)  }\nonumber\\
&  +{f_{1}\left(  x+e\right)  }\left\vert \frac{1}{f_{1}\left(  x\right)
+f_{2}\left(  x\right)  }-\frac{1}{f_{1}\left(  x+e\right)  +f_{2}\left(
x+e\right)  }\right\vert . \label{interpo1}%
\end{align}
We see that
\begin{align*}
\left\vert D_{e}f_{1}\left(  x\right)  \right\vert  &  \leq q_{A}%
(x)+q_{A}(x+e)\\
&  \leq\frac{2}{1+\min\left\{  \mathrm{d}\left(  x,\,\widehat{A}\right)
,\,\mathrm{d}\left(  x+e,\,\widehat{A}\right)  \right\}  ^{2d+3}}\\
&  \leq\frac{L}{1+\mathrm{d}\left(  x,\,\widehat{A}\right)  ^{2d+3}},
\end{align*}
as we assumed $\mathrm{d}(x,\widehat{A})\geq5$. The same estimate is true also
for $\left\vert D_{e}f_{2}\left(  x\right)  \right\vert .$

The second summand in (\ref{interpo1}) is bounded above by $\left\vert
D_{e}f_{1}\left(  x\right)  \right\vert +\left\vert D_{e}f_{2}\left(
x\right)  \right\vert ,$ so the claim follows.
\end{proof}

\begin{corollary}
\label{Rem_eta}
For all $
x\in\mathbb{Z}^{d}$ it holds that
\begin{enumerate}
\item[a)] for all $k\ge 2$ there exists $L=L(k)>0$ such that
\begin{equation}
\left\Vert \nabla^{k}\eta_{n}\left(  x\right)  \right\Vert _{\infty}\leq
\frac{L }{1+\mathrm{d}(x,\widehat{A})^{2d+3}}. \label{prop_etak}%
\end{equation}
\item[b)] For all $e$ neighbors of the origin and $k\ge 1$ there exists $L=L(k)>0$ such that
\begin{equation}
\left\Vert \nabla^k\eta_{n}\left(  x+e\right)  \right\Vert _{\infty}\leq\frac
{L}{1+\mathrm{d}(x,\widehat{A})^{2d+3}},\ \forall x\in\mathbb{Z}^{d},\ \forall
e. \label{prop_eta_shift}%
\end{equation}
\end{enumerate}
\end{corollary}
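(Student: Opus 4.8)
The plan is to bootstrap from the first-order bound \eqref{prop_eta2} already established in Lemma~\ref{lem:interpolation}, using the fact that a $k$-th order discrete derivative is a finite linear combination of first-order difference operators applied to lower-order derivatives, together with the (almost) constancy of the weight $1+\mathrm{d}(x,\widehat{A})^{2d+3}$ on bounded neighbourhoods. For part a), I would argue by induction on $k$. The base case $k=1$ is \eqref{prop_eta2}. For the inductive step, write $D_{e_1}\cdots D_{e_k}\eta_n(x) = D_{e_1}\big(D_{e_2}\cdots D_{e_k}\eta_n\big)(x) = \big(D_{e_2}\cdots D_{e_k}\eta_n\big)(x+e_1) - \big(D_{e_2}\cdots D_{e_k}\eta_n\big)(x)$, and bound each of the two terms by the inductive hypothesis applied at the points $x+e_1$ and $x$ respectively. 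This produces a bound of the form $L(k-1)\big(1+\mathrm{d}(x+e_1,\widehat{A})^{2d+3}\big)^{-1} + L(k-1)\big(1+\mathrm{d}(x,\widehat{A})^{2d+3}\big)^{-1}$, and one invokes the elementary comparison $|\mathrm{d}(x+e,\widehat A)-\mathrm{d}(x,\widehat A)|\le 1$, valid since $e$ is a neighbour of the origin, to conclude that both denominators are comparable up to a dimension-dependent constant; absorbing this constant into a new $L(k)$ gives \eqref{prop_etak}.

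For part b), one simply applies part a) at the shifted point $x+e$ instead of $x$: \eqref{prop_etak} at $x+e$ reads $\left\Vert\nabla^k\eta_n(x+e)\right\Vert_\infty \le L(k)\big(1+\mathrm{d}(x+e,\widehat A)^{2d+3}\big)^{-1}$ (and for $k=1$ this is just \eqref{prop_eta2} at $x+e$), and again $|\mathrm{d}(x+e,\widehat A)-\mathrm{d}(x,\widehat A)|\le 1$ lets one replace $\mathrm{d}(x+e,\widehat A)$ by $\mathrm{d}(x,\widehat A)$ at the cost of a constant, yielding \eqref{prop_eta_shift}. The same mechanism also handles the edge case in part a) where $\mathrm{d}(x,\widehat A)\le 4$, exactly as in the proof of Lemma~\ref{lem:interpolation}: on this bounded set the weight is bounded above and below by constants, and $\|\nabla^k\eta_n\|_\infty\le 2^k$ trivially since $\eta_n$ takes values in $[0,1]$, so a sufficiently large $L(k)$ covers it uniformly.

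The only point requiring any care — and it is the step I would flag as the main (though minor) obstacle — is the quantitative comparison of the weights $1+\mathrm{d}(x,\widehat A)^{2d+3}$ and $1+\mathrm{d}(x+e,\widehat A)^{2d+3}$ when $\widehat A$ is unbounded or even empty. When $\widehat A=\emptyset$ the convention $\mathrm{d}(x,\widehat A)=+\infty$ makes all these bounds vacuous (both sides are $0$), so one may assume $\widehat A\ne\emptyset$; then $\mathrm{d}(\cdot,\widehat A)$ is finite everywhere and $1$-Lipschitz for the graph distance, whence for $m:=\mathrm{d}(x,\widehat A)\ge 1$ one has $\mathrm{d}(x+e,\widehat A)\ge m-1\ge m/2$ when $m\ge 2$, giving $1+\mathrm{d}(x+e,\widehat A)^{2d+3}\ge 2^{-(2d+3)}\big(1+\mathrm{d}(x,\widehat A)^{2d+3}\big)$ after a trivial adjustment of constants, and the remaining cases $m\in\{0,1\}$ fall under the bounded-neighbourhood argument above. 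All constants depend only on $d$ and on $k$, as required, and in particular not on $n$ or on $A$.
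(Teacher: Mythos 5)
Your argument is correct and matches the paper's own (terse) proof in substance: both reduce higher-order discrete derivatives to first-order ones evaluated at nearby points and then transfer the weight using $\left\vert \mathrm{d}(x+e,\widehat{A})-\mathrm{d}(x,\widehat{A})\right\vert\le 1$, with the shifted-point case b) handled the same way. Your explicit induction, edge-case discussion ($\widehat A=\emptyset$, small $\mathrm{d}(x,\widehat A)$), and the quantitative comparison of the weights merely spell out details the paper leaves implicit.
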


\begin{proof}\noindent
\begin{enumerate}
\item[a)] (\ref{prop_eta2}) implies that also higher order derivatives can be
estimated by the same bound with a changed $L$ because the supremum norm of higher
order discrete derivatives can be estimated by the first order ones.%
\item[b)] Again this holds by an estimate with first order derivatives and the fact that 
$$\left\vert \mathrm{d}(x+e,\widehat{A})-\mathrm{d}(x,\widehat
{A})\right\vert \leq1.$$
\end{enumerate}
\end{proof}
Consider now an infinite set $A$ with the property that $C_{n,A}$ is finite
for all $n.$
Given $A$ with $A^{\c}$ finite, and $0\notin A$, we consider the unique
function $h_{A}$ which satisfies $h_{A}\left(  x\right)  =0$ on $A,$ and for all $x\in A^{\c}$
\[
\Delta^{2}h_{A}\left(  x\right)  =\delta_{0}(x).
\]

\begin{lemma}
\label{lem_bound_Sobolev}With the above notation, we have for $n\geq1$%
\[
\Vert h_{A}\Vert_{A,C_{n+1}^{\c}}^{2}\leq L\Vert h_{A}\Vert_{A,C_{n+1}\setminus
C_{n}}^{2}.
\]
It is important to emphasize that $L$ depends neither on $A$ nor on $n$.
\end{lemma}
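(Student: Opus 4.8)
The plan is to mimic the continuum Maz'ya argument sketched in the proof outline, but with the random weighted norm $\|\cdot\|_{A,\,\cdot}$ in place of the ordinary Sobolev norm, using the interpolating function $\eta_n$ from Lemma~\ref{lem:interpolation} as the cutoff. Write $h=h_A$ for brevity. First I would use the key structural fact: since $\eta_n=1$ on $\upsilon_2(C_{n+1}^{\c})$ and since $\|f\|_{A,E}$ for a set $E\subset\upsilon_2(C_{n+1}^{\c})$ only sees values of $f$ on $E\cup\partial_2 E$, we have $\|h\|_{A,\,C_{n+1}^{\c}}=\|\eta_n h\|_{A,\,C_{n+1}^{\c}}\leq\|\eta_n h\|_{A,\,\Z^d}$. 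Now $\eta_n h$ vanishes identically on $A$ (because $h$ does), so Lemma~\ref{lem:equivalence} applies and gives
\[
\|\eta_n h\|_{A,\,\Z^d}^2\leq L\sum_{x\in\Z^d}\|\nabla^2(\eta_n h)(x)\|^2.
\]
Next, because $\eta_n\equiv 0$ on $\upsilon_2(C_n)$ and $\eta_n\equiv 1$ on $\upsilon_2(C_{n+1}^{\c})$, and since $\nabla^2(\eta_n h)(x)$ depends only on values at graph-distance $\leq 2$ from $x$, the summand vanishes unless $x\in\upsilon_2(C_{n+1}\setminus C_n)$; on the region where $\eta_n\equiv 1$ one gets $\nabla^2(\eta_n h)=\nabla^2 h$, and the region where $\eta_n$ is genuinely varying is contained in (a bounded neighbourhood of) $C_{n+1}\setminus C_n$. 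Thus the right-hand sum is controlled by $\|\nabla^2 h\|^2$ and lower-order derivatives of $h$, all summed over $C_{n+1}\setminus C_n$ (enlarged by a bounded amount, which by Remark~\ref{rem:intersection_empty} and the definition of $C_n$ costs only a constant).

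The heart of the matter is the second step: estimating $\|\nabla^2(\eta_n h)\|$ via a discrete Leibniz rule. Expanding $D_{e_1}D_{e_2}(\eta_n h)$ produces terms of the schematic form $\eta_n(\cdot)\,D_{e_1}D_{e_2}h(\cdot)$, $D_{e_i}\eta_n(\cdot)\,D_{e_j}h(\cdot)$ and $D_{e_1}D_{e_2}\eta_n(\cdot)\,h(\cdot)$, all evaluated at $x$ or at neighbours $x+e$ of $x$. Since $0\leq\eta_n\leq 1$, the first type is bounded by $\|\nabla^2 h(x)\|$. For the other types I would invoke Corollary~\ref{Rem_eta}: both $\|\nabla^k\eta_n(x)\|_\infty$ and $\|\nabla^k\eta_n(x+e)\|_\infty$ are bounded by $L/(1+\mathrm{d}(x,\widehat A)^{2d+3})$. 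Crucially, these are exactly the weights appearing in the norm $\|\cdot\|_{A,E}$: the cross term $|D_{e_i}\eta_n\, D_{e_j}h|^2$ is bounded by $L\,\|\nabla h(x)\|^2/(1+\mathrm{d}(x,\widehat A)^{2d+3})$, which since the weight $(1+\mathrm d^{2d+3})^{-1}\leq(1+\mathrm d^{d+2})^{-1}$ is dominated by the middle term $\|\nabla h(x)\|^2/(1+\mathrm{d}(x,\widehat A)^{d+2})$ of the norm; the term $|D_{e_1}D_{e_2}\eta_n\, h|^2$ is bounded by $L\,h(x)^2/(1+\mathrm{d}(x,\widehat A)^{2d+3})^2\leq L\,h(x)^2/(1+\mathrm{d}(x,\widehat A)^{2d+3})$, the first term of the norm. (One must also handle the shifted arguments $x+e$ using part b) of Corollary~\ref{Rem_eta} together with $|\mathrm d(x+e,\widehat A)-\mathrm d(x,\widehat A)|\leq 1$, which changes the constants but not the form of the bound.) Summing over $x\in\Z^d$ — effectively over the restricted support described above — one arrives at
\[
\sum_x\|\nabla^2(\eta_n h)(x)\|^2\leq L\,\|h\|_{A,\,C_{n+1}\setminus C_n}^2,
\]
again modulo enlarging $C_{n+1}\setminus C_n$ by a bounded graph-neighbourhood, which by the same distance-weight comparison only inflates the constant. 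Combining the three displayed inequalities yields the claim, and inspection shows every constant came either from $d$ (via Lemma~\ref{lem:equivalence}, Corollary~\ref{Rem_eta}, and combinatorial volume bounds) or from the fixed structure of the Leibniz expansion, so $L$ depends on neither $A$ nor $n$.

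The main obstacle I anticipate is the bookkeeping in the Leibniz step: discrete second differences of a product are genuinely messier than in the continuum (arguments get shifted by $e$, $e'$, $e+e'$), and one has to be careful that when a $\nabla\eta_n$ or $\nabla^2\eta_n$ factor is evaluated at a shifted point, the weight $1+\mathrm{d}(\cdot,\widehat A)^{2d+3}$ there is comparable to the one at $x$ — this is precisely why Corollary~\ref{Rem_eta}b) is stated, and it must be threaded through every term. A secondary, more cosmetic point is justifying that $\|\nabla^2(\eta_n h)\|$ is supported (up to a bounded neighbourhood) in $C_{n+1}\setminus C_n$ rather than merely in $C_{n+1}^{\c}$; this uses $\eta_n\equiv 1$ on $\upsilon_2(C_{n+1}^{\c})$ so that all differences of $\eta_n$ vanish there, and $\eta_n\equiv 0$ on $\upsilon_2(C_n)$ so that the whole product and its differences vanish there — Remark~\ref{rem:intersection_empty} guarantees these two regions do not interfere. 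Finally one should note that finiteness of $\|h\|_{A,C_{n+1}^{\c}}$, needed to make the manipulations rigorous, follows from $C_{n,A}$ being finite for all $n$ together with the polynomial decay of $G^A$ inherited from $G$ (so that $h=G^A(0,\cdot)$ and its discrete derivatives are square-summable against the given weights); this is the analogue of the remark in the outline that "in our problem this will not be a difficulty".
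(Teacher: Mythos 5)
There is a genuine gap, and it is the central one: your argument never uses the defining equation $\Delta^{2}h_{A}=\delta_{0}$ on $A^{\c}$, and the lemma is false without it. The critical false step is the localization claim that $\nabla^{2}(\eta_{n}h)(x)$ vanishes unless $x\in\upsilon_{2}(C_{n+1}\setminus C_{n})$. On $\upsilon_{2}(C_{n+1}^{\c})$ one has $\eta_{n}\equiv1$, hence $\nabla^{2}(\eta_{n}h)=\nabla^{2}h$ there, which is not zero; so your Leibniz expansion actually yields
\[
\Vert h\Vert_{A,C_{n+1}^{\c}}^{2}\leq L\Bigl(\Vert h\Vert_{A,C_{n+1}\setminus C_{n}}^{2}+\sum_{x\in C_{n+1}^{\c}}\bigl\Vert\nabla^{2}h(x)\bigr\Vert^{2}\Bigr),
\]
and the second sum is exactly the dominant part of the left-hand side, with a constant $L\geq1$ coming from Lemma~\ref{lem:equivalence}; the argument is circular. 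That this cannot be patched by bookkeeping alone is seen by noting that your proof would apply verbatim to any $f$ vanishing on $A$, but for such $f$ the statement is false: with $A^{\c}$ finite, take $f$ supported on the points of $A^{\c}$ lying at distance greater than $2$ from $C_{n+1}$; then $\Vert f\Vert_{A,C_{n+1}\setminus C_{n}}=0$ while $\Vert f\Vert_{A,C_{n+1}^{\c}}>0$. So biharmonicity of $h_{A}$ must enter, and it is what kills precisely the outer-region term you could not localize.

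The paper's route does this as follows. After the same first two steps (cutoff plus Lemma~\ref{lem:equivalence}), it does \emph{not} expand $\nabla^{2}(\eta_{n}h)$ pointwise; instead, summation by parts (Lemma~\ref{lem:sumbyparts}) converts $\sum_{x}\Vert\nabla^{2}(\eta_{n}h)(x)\Vert^{2}$ into a constant times $\langle\eta_{n}h,\Delta^{2}(\eta_{n}h)\rangle$. Applying the discrete product rule (\ref{product_rule}) twice decomposes $\Delta^{2}(\eta_{n}h)$ into $F_{1}=\eta_{n}\Delta^{2}h$ plus five terms each carrying at least one discrete derivative of $\eta_{n}$. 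The pairing $\langle\eta_{n}h,F_{1}\rangle$ vanishes because $\Delta^{2}h=0$ off $\{0\}\cup A$, $h=0$ on $A$, and $\eta_{n}(0)=0$; this is exactly where the equation and the fact that $0\in C_{n}$ are used, and it removes the contribution of the region where $\eta_{n}\equiv1$. The remaining terms are supported in $C_{n+1}\setminus C_{n}$ and are bounded by Cauchy--Schwarz using the weights of Lemma~\ref{lem:interpolation} and Corollary~\ref{Rem_eta} against the three pieces of the norm --- this weight-matching part of your proposal is correct and is indeed how the paper estimates $F_{2},\dots,F_{6}$. (A minor further point: finiteness of the norms is immediate here since $A^{\c}$ is finite, so $h_{A}$ has finite support; no decay of $G$ is needed for that.)
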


\begin{proof}
Fix $n,$ and let $\eta_{n}$ be as in Lemma \ref{lem:interpolation}. We also drop the subscript $A$ in $h_A$. We have
with Lemma \ref{lem:equivalence} and Lemma \ref{lem:sumbyparts}%
\begin{align}
\Vert h\Vert_{A,C_{n+1}^{\c}}^{2}  &  =\Vert\eta_{n}h\Vert_{A,C_{n+1}^{\c}}%
^{2}\leq\Vert\eta_{n}h\Vert_{A,\mathbb{Z}^{d}}^{2}\leq L\sum_{e,e^{\prime}%
}\left\langle D_{e}D_{e^{\prime}}\eta_{n}h,D_{e}D_{e^{\prime}}\eta
_{n}h\right\rangle \nonumber\\
&  =L\left\langle \eta_{n}h,\Delta^{2}\left(  \eta_{n}h\right)  \right\rangle.\label{eq:scalar_prod}
\end{align}
By an elementary computation, one has for any $f,g:\mathbb{Z}^{d}%
\rightarrow\mathbb{R}$ and $x\in\mathbb{Z}^{d}$%
\begin{equation}
\Delta\left(  fg\right)  \left(  x\right)  =f\left(  x\right)  \Delta g\left(
x\right)  +\Delta f\left(  x\right)  g\left(  x\right)  +\frac{1}{2d}\sum
_{e}D_{e}f\left(  x\right)  D_{e}g\left(  x\right)  . \label{product_rule}%
\end{equation}
Applying this twice gives%
\begin{align*}
\Delta^{2}(\eta_{n}h)  &  =\eta_{n}\Delta^{2}h+\left(  \Delta^{2}\eta
_{n}\right)  h+2\Delta\eta_{n}\Delta h+\frac{1}{d}\sum_{e}\left(D_{e}\Delta\eta
_{n} \right)D_{e}h\\
&  +\frac{1}{d}\sum_{e}\left(D_{e}\eta_{n}\right) \left(D_{e}\Delta h\right)+\frac{1}{4d^{2}}%
\sum_{e,e^{\prime}}\left(D_{e^{\prime}}
D_{e}\eta_{n}\right) D_{e^{\prime}}D_{e}h\\
&  =:F_{1}+F_{2}+2F_{3}+\frac{1}{d}F_{4}+\frac{1}{d}F_{5}+\frac{1}{4d^{2}%
}F_{6}.%
\end{align*}%
Note that
\begin{equation}
\left\langle \eta_{n}h,F_{1}\right\rangle =\left\langle \eta_{n}h,\eta
_{n}\Delta^{2}h\right\rangle =0, \label{F1}%
\end{equation}
as for $x\neq0$ we have $\Delta^{2}h\left(  x\right)  =0$ and for $x=0$ we have $\eta_{n}\left(  0\right)  =0$.
All the other terms contain derivatives of $\eta_{n}$. Therefore, every derivative of the function $\eta_n$ will be non-zero only for points in $C_{n+1}%
\setminus C_{n}.$
Since we have \eqref{eq:scalar_prod}, we need to estimate $\langle \eta_n h,\,F_i \rangle$ for $i=2,\,\ldots,\,6$. Let us begin with $i=2$: by Corollary \ref{Rem_eta}
\begin{align}
\left\vert \left\langle \eta_{n}h,F_{2}\right\rangle \right\vert  &  \leq
\sum_{x}\left\vert \Delta^{2}\eta_{n}\left(  x\right)  \right\vert h\left(
x\right)  ^{2}  =\sum_{x\in C_{n+1}\backslash C_{n}}\left\vert \Delta^{2}\eta_{n}\left(
x\right)  \right\vert h\left(  x\right)  ^{2}\nonumber\\
&  \leq\sum_{x\in C_{n+1}\backslash C_{n}}\frac{L}{1+\mathrm{d}(x,\widehat
{A})^{2d+3}}h\left(  x\right)  ^{2}  \leq L\Vert h\Vert_{A,C_{n+1}\setminus C_{n}}^{2}.\label{F2}
\end{align}
Let us see now $i=3$. With the Cauchy-Schwarz inequality we get
\begin{align}
\left\vert \left\langle \eta_{n}h,F_{3}\right\rangle \right\vert  &
\leq L\left\vert \sum\nolimits_{x\in C_{n+1}\backslash C_{n}}\eta_{n}\left(
x\right)  h\left(  x\right)  \Delta\eta_{n}\left(  x\right)  \Delta h\left(
x\right)  \right\vert \nonumber\\
&  \leq L\sqrt{\sum\nolimits_{x\in C_{n+1}\backslash C_{n}}\left(  \Delta
h\left(  x\right)  \right)  ^{2}}\sqrt{\sum\nolimits_{x\in C_{n+1}\backslash
C_{n}}\left(  \Delta\eta_{n}\left(  x\right)  \right)  ^{2}h\left(
x\right)  ^{2}}\nonumber\\
&  \leq L\Vert h\Vert_{A,C_{n+1}\setminus C_{n}}^{2}\label{F3}
\end{align}
using Corollary~\ref{Rem_eta}, \eqref{eq:char_lapl} and the arithmetic-geometric mean inequality.

To estimate the part with $F_{4},$ we first observe that $D_{e}\Delta\eta
_{n}\left(  x\right)  $ is $0$ outside $C_{n+1}\backslash C_{n},$ and by
Remark \ref{Rem_eta}%
\[
\left\vert D_{e}\Delta\eta_{n}\left(  x\right)  \right\vert \leq\frac
{L}{1+\mathrm{d}(x,\widehat{A})^{2d+3}}.
\]
Therefore, using the inequality of arithmetic and geometric means,%
\begin{align}
\left\vert \left\langle \eta_{n}h,F_{4}\right\rangle \right\vert  &  \leq
L\sum_{e} \sqrt{\sum_{x\in C_{n+1}\backslash C_{n}}\frac
{h(x)^2}{\left(1+\mathrm{d}(x,\widehat{A})^{2d+3}\right)^2}}  \sqrt{\sum_{x\in C_{n+1}\backslash C_{n}}\frac
{D_{e}h\left(  x\right)^2 }{\left(1+\mathrm{d}(x,\widehat{A})^{2d+3}\right)^2}} 
\nonumber\\
&  \leq L\sum_{e}\sqrt{\sum_{x\in C_{n+1}\backslash C_{n}}\frac
{h(x)^2}{1+\mathrm{d}(x,\widehat{A})^{2d+3}}}\sqrt{\sum_{x\in C_{n+1}\backslash C_{n}}\frac
{D_{e}h\left(  x\right)^2 }{1+\mathrm{d}(x,\widehat{A})^{d+2}}}\nonumber\\
&  \leq L\Vert h\Vert_{A,C_{n+1}\setminus C_{n}}^{2}.\label{F4}
\end{align}
For the estimate of $\left\langle \eta_{n}h,\,F_5\right\rangle $ we can use Lemma~\ref{lem:interpolation} and \eqref{eq:char_lapl} again to say that, for a fixed direction $e$,
\begin{align*}
&\left\vert \left\langle \eta_{n}h,\,\left(D_{e}\eta_{n}\right) \left(D_{e}\Delta
h\right)\right\rangle \right\vert    \\
&  \leq\left\vert \sum_{x\in C_{n+1}\backslash C_{n}}\eta_{n}\left(  x\right)
h\left(  x\right)  D_{e}\eta_{n}\left(  x\right)  \Delta h\left(  x\right)
\right\vert   +\left\vert \sum_{x\in C_{n+1}\backslash C_{n}}\eta_{n}\left(  x\right)
h\left(  x\right)  D_{e}\eta_{n}\left(  x\right)  \Delta h\left(  x+e\right)
\right\vert \\
&  \leq\sqrt{\sum_{x\in C_{n+1}\backslash C_{n}}h\left(  x\right)  ^{2}%
D_{e}\eta_{n}\left(  x\right)  ^{2}}\left[  \sqrt{\sum_{x\in C_{n+1}\backslash C_{n}}\Delta h\left(
x\right)  ^{2}}+\sqrt{\sum_{x\in C_{n+1}\backslash C_{n}}\Delta h\left(
x+e\right)  ^{2}}\right] \\
&  \leq L\Vert h\Vert_{A,C_{n+1}\setminus C_{n}}^{2}.
\end{align*}
Summing over $e$ yields%
\begin{equation}
\left\vert \left\langle \eta_{n}h,F_{5}\right\rangle \right\vert \leq L\Vert
h\Vert_{A,C_{n+1}\setminus C_{n}}^{2}. \label{F5}%
\end{equation}
It finally remains to show
\begin{equation}
\left\vert \left\langle \eta_{n}h,F_{6}\right\rangle \right\vert \leq L\Vert
h\Vert_{A,C_{n+1}\setminus C_{n}}^{2} \label{F6}%
\end{equation}
which follows in the same way as (\ref{F3}).

Combining (\ref{F1})-(\ref{F6}) proves the lemma.
\end{proof}

With these preparations, we can now prove that $\|h\|^{2}_{A, C_{n+1}^{\c}}$
decays exponentially.

\begin{lemma}
\label{lem:deterministic} Let $d\ge 1$, and let $A\subset
\mathbb{Z}^{d}\setminus\{0\}$ be such that $A^{\c}$ is finite. There exist
constants $c_{1}\left(  d\right)  >0$ and $\delta\left(  d\right)  >0$,
independent of $A$, such that, for all $n\in\mathbb{N},$
\[
\Vert h\Vert_{A,C_{n+1}^{\c}}^{2}\leq c_{1}\mathrm{e}^{-\delta n}\Vert
h\Vert_{A,\mathbb{Z}^{d}}^{2}.%
\]

\end{lemma}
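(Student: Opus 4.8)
The statement is an iteration of the recursive bound in Lemma~\ref{lem_bound_Sobolev}, so the plan is to set up the recursion on the tail Sobolev norms $a_n:=\Vert h\Vert_{A,C_{n+1}^{\c}}^{2}$ and run it. First I would observe that by the additivity of the norm over disjoint sets (fact (2) after \eqref{eq:def_norm}), writing $C_{n+1}^{\c}=(C_{n+1}\setminus C_n)\sqcup C_{n+1}^{\c}$ is not quite what is needed; instead one uses $C_n^{\c}=(C_{n+1}\setminus C_n)\sqcup C_{n+1}^{\c}$, hence
\[
\Vert h\Vert_{A,C_{n+1}\setminus C_n}^{2}=\Vert h\Vert_{A,C_n^{\c}}^{2}-\Vert h\Vert_{A,C_{n+1}^{\c}}^{2}=a_{n-1}-a_n .
\]
Combining this with Lemma~\ref{lem_bound_Sobolev}, which gives $a_n\le L(a_{n-1}-a_n)$, I get $(1+L)a_n\le L a_{n-1}$, i.e.
\[
a_n\le\frac{L}{1+L}\,a_{n-1}=:\e^{-\delta}a_{n-1},\qquad \delta:=\log\frac{1+L}{L}>0,
\]
with $\delta$ depending only on $d$ through the constant $L$ of Lemma~\ref{lem_bound_Sobolev}, and in particular independent of $A$ and $n$. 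Iterating this from $n$ down to a base case yields $a_n\le\e^{-\delta n}a_0$, and since $a_0=\Vert h\Vert_{A,C_1^{\c}}^{2}\le\Vert h\Vert_{A,\mathbb{Z}^{d}}^{2}$ (again by disjoint additivity, as $C_1^{\c}\subset\mathbb{Z}^{d}$), the claimed bound follows with $c_1=1$ (or any $c_1\ge 1$, absorbing the base-case offset if one prefers to start the iteration at $n=1$ rather than $n=0$).

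A few small points need care. One must check that all the norms involved are finite so that the subtraction $a_{n-1}-a_n$ is legitimate: this is where the hypothesis that $A^{\c}$ is finite and that $C_{n,A}$ is finite for all $n$ enters, since $h=h_A$ solves $\Delta^2 h=\delta_0$ off the finite set $A^{\c}$ with zero boundary data on $A$, so $h$ and its discrete derivatives decay and $\Vert h\Vert_{A,\mathbb{Z}^d}^2<\infty$; then all tail norms $a_n$ are finite and nonincreasing in $n$. One should also make sure the indexing in Lemma~\ref{lem_bound_Sobolev} (valid for $n\ge 1$) lines up with the indexing here: the recursion $a_n\le\e^{-\delta}a_{n-1}$ holds for $n\ge 1$, which iterates to $a_n\le\e^{-\delta n}a_0\le\e^{-\delta n}\Vert h\Vert_{A,\mathbb{Z}^d}^2$ for all $n\ge 0$, matching the statement.

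I do not expect any genuine obstacle here: the content of the exponential decay is entirely in Lemma~\ref{lem_bound_Sobolev}, whose proof has already been carried out, and the present lemma is the standard ``if $a_n\le L(a_{n-1}-a_n)$ then $a_n$ decays geometrically'' argument. The only thing to be careful about is bookkeeping: keeping track that $\delta$ and $c_1$ depend on nothing but $d$ (which is immediate from the corresponding uniformity in Lemma~\ref{lem_bound_Sobolev}), and handling the finiteness of the norms so that the telescoping identity for $\Vert h\Vert_{A,C_{n+1}\setminus C_n}^2$ is valid.
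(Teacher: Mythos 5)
Your proof is correct and is essentially identical to the paper's: the same telescoping decomposition $C_n^{\c}=(C_{n+1}\setminus C_n)\sqcup C_{n+1}^{\c}$ combined with Lemma~\ref{lem_bound_Sobolev} yields $a_n\le\frac{L}{1+L}a_{n-1}$, which is iterated exactly as in the paper (the paper stops the iteration at $\Vert h\Vert_{A,C_1^{\c}}^2$ and takes $c_1=\frac{1+L}{L}$, a purely cosmetic difference). Your side remark on finiteness is even simpler than you suggest: since $h_A$ vanishes on $A$ and $A^{\c}$ is finite, $h_A$ is supported on a finite set, so $\Vert h\Vert_{A,\mathbb{Z}^d}^2<\infty$ without any decay argument.
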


\begin{proof}
From Lemma \ref{lem_bound_Sobolev} we get
\[
\Vert h\Vert_{A,C_{n+1}^{\c}}^{2}\leq L\Vert h\Vert_{A,C_{n+1}\setminus C_{n}%
}^{2}=L\left(  \Vert h\Vert_{A,C_{n}^{\c}}^{2}-\Vert h\Vert_{A,C_{n+1}^{\c}}%
^{2}\right)  ,
\]
that is, iterating the argument,
\begin{align*}
\Vert h\Vert_{A,C_{n+1}^{\c}}^{2}  &  \leq\frac{L}{1+L}\Vert h\Vert
_{A,C_{n}^{\c}}^{2}\leq\left(  \frac{L}{1+L}\right)  ^{n-1}\Vert h\Vert
_{A,C_{1}^{\c}}^{2}\\
&  \leq\frac{1+L}{L}\left(  \frac{L}{1+L}\right)  ^{n}\Vert h\Vert
_{A,\mathbb{Z}^{d}}^{2},
\end{align*}
proving the claim.
\end{proof}

\begin{corollary}
\label{Cor_main}If $d\geq5,$ then, under the same conditions and notation as above%
\[
\Vert h\Vert_{A,C_{n}^{\c}}^{2}\leq c_{1}\mathrm{e}^{-\delta n}.
\]

\end{corollary}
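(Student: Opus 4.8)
The plan is to combine the exponential decay of the random Sobolev norm from Lemma~\ref{lem:deterministic} with an absolute bound on $\Vert h\Vert_{A,\mathbb{Z}^d}^2$ that does not depend on $A$ or $n$. Since Lemma~\ref{lem:deterministic} already gives $\Vert h\Vert_{A,C_{n+1}^{\c}}^2\leq c_1\mathrm{e}^{-\delta n}\Vert h\Vert_{A,\mathbb{Z}^d}^2$ (and $C_n^{\c}\subset C_{n-1+1}^{\c}$, so a harmless reindexing handles the shift between $C_n^{\c}$ and $C_{n+1}^{\c}$), it suffices to show that there is a constant $L$, independent of $A$, with $\Vert h\Vert_{A,\mathbb{Z}^d}^2\leq L$.

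First I would observe that $h=h_A$ vanishes on $A$ by construction, so Lemma~\ref{lem:equivalence} applies and gives
\[
\Vert h\Vert_{A,\mathbb{Z}^d}^2\leq L\sum_{x\in\mathbb{Z}^d}\left\Vert\nabla^2 h(x)\right\Vert^2 .
\]
Next, by summation by parts (Lemma~\ref{lem:sumbyparts}, applied twice as in \eqref{eq:scalar_prod}), the right-hand side equals a constant times $\left\langle h,\Delta^2 h\right\rangle$ up to the usual equivalence between $\sum_x\Vert\nabla^2 h(x)\Vert^2$ and $\langle\Delta h,\Delta h\rangle=\langle h,\Delta^2 h\rangle$. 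Since $\Delta^2 h_A(x)=\delta_0(x)$ on $A^{\c}$ and $h_A\equiv0$ on $A$, we get $\left\langle h,\Delta^2 h\right\rangle=h_A(0)$. Thus everything reduces to an $A$-uniform bound on $h_A(0)$.

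To bound $h_A(0)$ I would use the probabilistic interpretation from Section~\ref{sec:general}: $h_A$ is exactly the Green's function $G^{A}$ of the membrane model killed on $A$ evaluated at $(0,0)$ (this is the boundary value problem characterizing $G^A_W$ with $W=A^{\c}$, passed to the infinite-volume limit), and by Corollary~\ref{Cor_variance_monotone} and the chain of inequalities $G^A(0,0)\leq G(0,0)<+\infty$ already used in the proof of the corollary following it, we have $h_A(0)=G^A(0,0)\leq G(0,0)$, which depends only on $d$. Hence $\Vert h\Vert_{A,\mathbb{Z}^d}^2\leq L\,G(0,0)=:c_1$, and plugging this into Lemma~\ref{lem:deterministic} (absorbing constants and the index shift into $c_1,\delta$) yields the claim.

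The only mildly delicate point is making the identification $\sum_x\Vert\nabla^2 h(x)\Vert^2\asymp\langle h,\Delta^2 h\rangle=h_A(0)$ rigorous, i.e. checking that $h_A$ is square-summable together with its first and second discrete derivatives so that the summation by parts is legitimate; this follows since $h_A=G^A(0,\cdot)\leq G(0,\cdot)$ and $G(0,\cdot)$ decays polynomially fast enough in $d\geq5$ by Lemma~\ref{lemma: covariance:mm}, with $\nabla h_A,\nabla^2 h_A$ controlled similarly. Everything else is bookkeeping, so I do not expect a real obstacle here.
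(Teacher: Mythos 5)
Your argument is essentially the paper's proof: Lemma~\ref{lem:equivalence}, summation by parts, $\Delta^{2}h_{A}=\delta_{0}$ on $A^{\c}$, and the monotonicity bound $h_{A}(0)=G_{A^{\c}}(0,0)\leq G(0,0)$ give $\Vert h\Vert_{A,\Z^{d}}^{2}\leq L\,G(0,0)$, which is then plugged into Lemma~\ref{lem:deterministic} (with the harmless index shift). One small correction: the square summability you flag at the end is immediate here because $A^{\c}$ is finite, so $h_{A}$ has finite support; the polynomial decay $\Vert x\Vert^{4-d}$ of $G(0,\cdot)$ you invoke would in fact not yield square summability for $5\leq d\leq 8$.
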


\begin{proof}
By Lemma \ref{lem:equivalence}%
\begin{align*}
\Vert h\Vert_{A,\mathbb{Z}^{d}}^{2}  &  \leq L\sum_{x\in\mathbb{Z}^{d}%
}\left\Vert \nabla^{2}h\left(  x\right)  \right\Vert^2 =L\left\langle
h,\Delta^{2}h\right\rangle   =L h\left(  0\right)  \leq L G(0,\,0)<+\infty.
\end{align*}
Plugging this in Lemma~\ref{lem:deterministic} concludes the proof.
\end{proof}

\subsection{Trapping configurations under the Bernoulli law}

In order to prove our main theorem, we have to obtain probabilistic properties
of the sequence $\left\{  C_{n,\mathcal{A}}\right\}  _{n}$ where $\mathcal{A}$
is random and distributed according to $\zeta^{\varepsilon}.$ Using the
Bernoulli domination, the key probabilistic estimates have to be done only for
a Bernoulli measure instead of $\zeta^{\varepsilon}$. Therefore, let
$p\in\left(  0,1\right)  $ and $\mathbb{P}^{p}$ be the Bernoulli site
percolation measure on the set of subsets of $\mathbb{Z}^{d}$ with parameter
$p.$ As $p$ is fixed in this section, we leave it out in the notation. We
write $\widehat{\mathcal{A}}$ for the set of interior points. Let
$B_{m}\left(  x\right)  :=\left\{  y:\,\De(x,\,y)\leq
m\right\}  $.

\begin{lemma}
\label{Le_box_empty}For $m\in\mathbb{N}$, $x\in\mathbb{Z}^{d},$%
\[
\mathbb{P}\left(  B_{m}\left(  x\right)  \cap\widehat{\mathcal{A}}%
=\emptyset\right)  \leq\left(  1-p^{2d+1}\right)  ^{\left\lfloor \frac{2m+1}{
3}\right\rfloor ^{d} }.
\]

\end{lemma}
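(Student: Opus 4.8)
The plan is to show that if $B_m(x) \cap \widehat{\mathcal A} = \emptyset$ then many disjoint small boxes each fail to be ``fully occupied'', and then use independence of the Bernoulli field on disjoint boxes. First I would recall what it means for a point $y$ to lie in $\widehat{\mathcal A}$: we need $y \in \mathcal A$ and $y+e \in \mathcal A$ for all $2d$ neighbours $e$, i.e.\ the whole closed $\ell_1$-ball $B_1(y)$ of size $2d+1$ is contained in $\mathcal A$. So the event $\{y \in \widehat{\mathcal A}\}$ has probability $p^{2d+1}$, and for two points $y, y'$ with $\mathrm d(y,y') \geq 3$ the sets $B_1(y)$ and $B_1(y')$ are disjoint, hence the events $\{y \in \widehat{\mathcal A}\}$ and $\{y' \in \widehat{\mathcal A}\}$ are independent under the Bernoulli measure $\mathbb P = \mathbb P^p$.

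Next I would extract from $B_m(x)$ a large collection of such $3$-separated points. The ball $B_m(x)$ in the $\ell_1$-metric contains the $\ell_\infty$-cube of side $2m+1$ centred at $x$ (actually it suffices to note $B_m(x) \supset \{y : \|y - x\|_\infty \le \lfloor m/d\rfloor\}$, but more simply one works inside $B_m$ directly with $\ell_1$ balls). Partition a sub-cube of $B_m(x)$ into a grid of disjoint translates; one can fit at least $\lfloor (2m+1)/3 \rfloor^d$ points $y_1, \dots, y_K$ inside $B_m(x)$ that are pairwise at $\ell_1$-distance $\geq 3$ and whose neighbourhoods $B_1(y_j)$ are still contained in a slightly enlarged region (and in any case the events $\{y_j \in \widehat{\mathcal A}\}$ depend on disjoint sets of sites). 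Then
\[
\{B_m(x) \cap \widehat{\mathcal A} = \emptyset\} \subset \bigcap_{j=1}^{K} \{y_j \notin \widehat{\mathcal A}\},
\]
and by independence over the disjoint blocks $B_1(y_j)$,
\[
\mathbb P\big(B_m(x) \cap \widehat{\mathcal A} = \emptyset\big) \leq \prod_{j=1}^{K} \mathbb P\big(y_j \notin \widehat{\mathcal A}\big) = (1 - p^{2d+1})^{K},
\]
with $K = \lfloor (2m+1)/3 \rfloor^d$, which is exactly the claimed bound.

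The only mildly delicate point is the combinatorial bookkeeping: one must check that a grid of spacing $3$ fits $\lfloor (2m+1)/3\rfloor^d$ centres inside $B_m(x)$ with the separation property, and — if one insists the $y_j$ themselves (together with all neighbours) lie inside $B_m(x)$ so that ``$B_m(x)\cap\widehat{\mathcal A}=\emptyset$'' genuinely forces $y_j\notin\widehat{\mathcal A}$ — one needs the centres to be at distance $\le m-1$ from $x$; shrinking the cube slightly absorbs this and does not change the floor count, by translation invariance of the Bernoulli measure (so $x$ may as well be the origin). I do not expect any real obstacle here; the estimate is a standard ``many disjoint boxes'' argument, and the only care needed is to make the constants in the exponent match the stated $\lfloor (2m+1)/3\rfloor^d$ exactly.
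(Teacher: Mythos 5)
Your probabilistic core is exactly the paper's argument: select centres $y_j$ whose radius-$1$ neighbourhoods $B_1(y_j)$ are pairwise disjoint, note that each event $\{y_j\in\widehat{\mathcal A}\}$ has probability $p^{2d+1}$ and that these events are independent, and bound the probability that $B_m(x)$ misses $\widehat{\mathcal A}$ by $(1-p^{2d+1})^{K}$ with $K=\lfloor(2m+1)/3\rfloor^d$. (One small simplification you could make: you only need $y_j\in B_m(x)$, not $y_j$ together with all its neighbours, since $\widehat{\mathcal A}$ is defined globally; $B_m(x)\cap\widehat{\mathcal A}=\emptyset$ already forces $y_j\notin\widehat{\mathcal A}$.)

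The one place where your write-up goes wrong is precisely the ``mildly delicate'' geometric bookkeeping you flagged. The inclusion you state first is backwards: the $\ell_1$-ball of radius $m$ is \emph{contained in}, not containing, the $\ell_\infty$-cube of side $2m+1$. And your fallback, working ``directly with $\ell_1$ balls'' inside the genuine $\ell_1$-ball, cannot produce $\lfloor(2m+1)/3\rfloor^d$ centres at pairwise distance $\ge 3$: the $\ell_1$-ball $B_m$ contains only on the order of $(2m)^d/d!$ points, and $3$-separated centres have disjoint $(2d+1)$-point neighbourhoods, so a volume count caps their number at roughly $(2m)^d/(d!\,(2d+1))$, which is below $(2m/3)^d$ for every $d\ge 2$ and large $m$ (your other fix, the cube of radius $\lfloor m/d\rfloor$, likewise yields too few centres). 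The paper sidesteps this by treating $B_m$ as the hypercube of side length $2m+1$ (consistent with the renormalized boxes used later), in which case a spacing-$3$ grid gives exactly $\lfloor(2m+1)/3\rfloor^d$ centres with disjoint $B_1$-neighbourhoods and your argument goes through verbatim. So the approach is the same as the paper's and is sound under the cube reading of $B_m$; under the literal $\ell_1$ reading your packing claim (and indeed the stated exponent) does not hold, so you should either adopt the cube interpretation or accept a dimension-dependent constant in the exponent, which is all the later applications actually require.
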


\begin{proof}
It suffices to take $x=0$ and write $B_{m}$ for $B_{m}\left(  0\right)  $.
Note that $B_{m}$ is a hypercube of side length $2m+1.$ Put $n:=\left\lfloor
\left(  2m+1\right)  /3\right\rfloor .$ We can place $n^{d}$ pairwise disjoint
boxes $B_{1}\left(  x_{j}\right)  ,\ 1\leq j\leq n^{d}$ in $B_{m}.$ As these
boxes are disjoint, the events $\left\{  x_{j}\in\widehat{\mathcal{A}%
}\right\}  $ are independent and they have probability $p^{2d+1}$. Therefore%
\[
\mathbb{P}\left(  B_{m}\cap\widehat{\mathcal{A}}=\emptyset\right)
\leq\mathbb{P}\left(  x_{j}\notin\widehat{\mathcal{A}},\ \forall\,j\leq
n^{d}\right)  =\left(  1-p^{2d+1}\right)  ^{\left\lfloor \frac{2m+1}%
{3}\right\rfloor ^{d}}.
\]

\end{proof}

\begin{lemma}
\label{Le_main}There exist $\lambda,\,K>0$ and $n_{0}\in \N$ depending only on the
dimension $d$ and $p$ such that for all $n\geq n_{0}$ and all $N\geq Kn$%
\begin{equation}\label{eq:equality}
\mathbb{P}\left(  \sup_{x\in C_{n,\mathcal{A}}}\mathrm{d}\left(  0,x\right)
>Kn\right)  =\mathbb{P}\left(  \inf\limits_{x:\mathrm{d}\left(  x,0\right)
>Kn}\widehat{\mathrm{d}}\left(  0,x\right)  \leq{10}n\right)  \leq
\mathrm{e}^{-\lambda n}%
\end{equation}

\end{lemma}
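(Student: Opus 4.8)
The equality in \eqref{eq:equality} is immediate from the definition \eqref{eq:C_n} of $C_{n,\mathcal A}$: the event $\sup_{x\in C_{n,\mathcal A}}\mathrm d(0,x)>Kn$ is exactly the event that some $x$ with $\mathrm d(x,0)>Kn$ still satisfies $\widehat{\mathrm d}(0,x)\le 10n$. So the whole content is the tail bound on the right. The strategy is a path-counting (Peierls-type) argument: if $\widehat{\mathrm d}(0,x)\le 10n$ for some far-away $x$, then there is a nearest-neighbor path $\psi$ from $0$ of graph length at least $Kn$ with $\phi_{\mathcal A}(\psi)=\sum_{i}q_{\mathcal A}(x_i)\le 10n$. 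Since each $q_{\mathcal A}(x_i)\le 1$, such a path is "cheap" only if most of its vertices are far from $\widehat{\mathcal A}$, i.e. the path must avoid neighborhoods of $\widehat{\mathcal A}$ for most of its length. But by Lemma~\ref{Le_box_empty}, under the Bernoulli measure it is very unlikely that $\widehat{\mathcal A}$ has a large empty region, so a long cheap path is very unlikely.

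\textbf{Key steps.} First I would fix a scale $m=m(d,p)$ large enough that $\beta:=(1-p^{2d+1})^{\lfloor (2m+1)/3\rfloor^d}$ is as small as we like (this is the quantity from Lemma~\ref{Le_box_empty}); concretely we want $\beta$ small compared to the connectivity constant of $\Z^d$, i.e. $\beta\cdot 2d<1$ with a lot of room to spare. Call a vertex $z$ \emph{good} if $B_m(z)\cap\widehat{\mathcal A}\neq\emptyset$ and \emph{bad} otherwise; by Lemma~\ref{Le_box_empty}, $\mathbb P(z\text{ bad})\le\beta$, and badness of $z$ depends only on the configuration in $B_m(z)$, so the badness field is a $\{0,1\}$-valued field with finite range of dependence $2m$ and marginal $\le\beta$.

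Second, I would relate the two distances. If $z$ is good then there is a point of $\widehat{\mathcal A}$ within graph distance $m$ of $z$, hence $\mathrm d(z,\widehat{\mathcal A})\le m$, so $q_{\mathcal A}(z)\ge (1+m^{2d+3})^{-1}=:c_m>0$. Consequently, for any nearest-neighbor path $\psi=(x_0=0,\dots,x_\ell)$ we have
\[
\phi_{\mathcal A}(\psi)=\sum_{i=0}^{\ell}q_{\mathcal A}(x_i)\ \ge\ c_m\,\#\{i:\ x_i\text{ good}\}.
\]
So $\phi_{\mathcal A}(\psi)\le 10n$ forces $\#\{i:\ x_i\text{ good}\}\le 10n/c_m$. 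Now take $K:=20/c_m$ (say, plus a safety factor). If $\psi$ reaches a point at graph distance $>Kn$ from $0$, then $\ell> Kn$, so the number of \emph{bad} vertices among $x_0,\dots,x_\ell$ is at least $\ell-10n/c_m> Kn-10n/c_m\ge \tfrac12 Kn$; i.e. at least a fixed positive fraction, say $\ge Kn/2$, of the path's vertices are bad. Among these bad vertices I can extract a subfamily of size $\ge Kn/(2(2m+1)^d)$ that are pairwise at graph distance $>2m$ (greedily: each chosen vertex excludes at most $(2(2m)+1)^d$ others), so that their badness indicators are \emph{independent}, each of probability $\le\beta$.

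\textbf{Union bound and the main obstacle.} Putting it together: by taking $x$ on a nearest-neighbor path of minimal $\phi_{\mathcal A}$, the bad event implies the existence of a self-avoiding path $\psi$ from $0$ of some length $\ell\in(Kn,\text{stop})$ — actually it is cleanest to truncate at the first time the path leaves $B_{Kn}(0)$, so $\ell\le 2dKn$ say — containing a set $S\subset\psi$ of $\ge Kn/(2(2m+1)^d)$ vertices, pairwise $>2m$ apart, all bad. For fixed length $\ell$ there are at most $(2d)^\ell$ self-avoiding paths from $0$; for each, the probability that a prescribed independent subfamily of size $s:=\lceil Kn/(2(2m+1)^d)\rceil$ is all bad is $\le\beta^s$. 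Summing the path count and the $\beta^s$ factor over all relevant $\ell$, one gets a bound of order $\sum_{\ell} (2d)^\ell \binom{\ell}{\le\cdot}\beta^{c\ell}$; provided $m$ (hence $\beta$) was chosen so that $(2d)\cdot(\text{combinatorial factors})\cdot\beta^{c}<\e^{-1}$, this is $\le\e^{-\lambda n}$ for an explicit $\lambda>0$ and all $n\ge n_0$. The main technical obstacle is the bookkeeping in this union bound: one must choose the constants in the right order ($m$ large first, then $K$ large depending on $m$ and $c_m$, then $\lambda,n_0$), and carefully combine the entropy of self-avoiding paths with the extracted-independent-bad-vertices probability, keeping the fraction of bad vertices bounded below uniformly in $n$. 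The finite-range-of-dependence structure (bad vertices at distance $>2m$ are independent) is what makes the Peierls estimate go through; one could alternatively invoke a standard stochastic-domination result for finite-range fields by Bernoulli percolation, but the direct greedy extraction is enough here. Once the tail bound is established for the Bernoulli measure $\mathbb P^p$, Proposition~\ref{prop:dom} transfers it to $\zeta^{\varepsilon}$ with $p=\rho_+(d,\varepsilon)$, since adding pinned points only increases $\widehat{\mathcal A}$ and hence only decreases $\widehat{\mathrm d}$ by \eqref{monoton1} — wait, that goes the wrong way, so instead one uses that $\zeta^\varepsilon\prec\mathbb P^{\rho_+}$ together with the fact that the event in \eqref{eq:equality} is \emph{decreasing} in $\mathcal A$ (more pinned points make $C_{n,\mathcal A}$ smaller), so its $\zeta^\varepsilon$-probability is at most its $\mathbb P^{\rho_-}$-probability; thus one actually runs the above estimate with $p=\rho_-(d,\varepsilon)$.
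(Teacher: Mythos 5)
Your reduction of the equality to the definition of $C_{n,\mathcal A}$, and your good/bad dichotomy based on Lemma~\ref{Le_box_empty} with the bound $q_{\mathcal A}(z)\ge (1+m^{2d+3})^{-1}$ at good vertices, are both fine and in the spirit of the paper. The gap is in the union bound: you sum the entropy of \emph{microscopic} self-avoiding paths, $(2d)^{\ell}$, against a probability gain obtained by extracting one independent bad vertex per ball of radius $\approx 2m$ along the path. Quantitatively, $-\log\beta\asymp p^{2d+1}m^{d}$ while your extraction yields only $s\asymp \ell/m^{d}$ independent bad vertices, so $\beta^{s}\le \exp\bigl(-C(d)\,p^{2d+1}\,\ell\bigr)$ with a rate that is \emph{independent of $m$}: the $m^{d}$ you gain in $-\log\beta$ is exactly cancelled by the $m^{d}$ you lose in the extraction. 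Hence the condition you need, $(2d)\beta^{c(m)}<1$, cannot be met by any choice of $m$ once $p^{2d+1}\ll \log(2d)$ — and in the application $p=\rho_-(d,\varepsilon)$ is small — so $\sum_{\ell}(2d)^{\ell}\beta^{c\ell}$ diverges and the Peierls estimate does not close. This is not merely a lossy bookkeeping issue: geometrically, a single void of radius $2m$ (cost $\exp(-cp^{2d+1}m^{d})$) can contain of order $m^{d}$ vertices of a self-avoiding path, so a cheap path can wiggle inside one void and accumulate a positive fraction of bad vertices at the probabilistic cost of just that one void; any argument done at lattice scale runs into this. (Two smaller slips: truncating at the first exit of $B_{Kn}(0)$ does \emph{not} give $\ell\le 2dKn$ — a self-avoiding path confined to that ball can have length of order $(Kn)^{d}$; and independence of badness indicators needs pairwise distance $\ge 2m+3$, not $>2m$, since badness of $z$ depends on $\mathcal A\cap B_{m+1}(z)$.)

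The paper's proof fixes precisely this by coarse-graining before taking the union bound: $\mathbb Z^{d}$ is tiled by boxes of side $M=M(d,p)$, the indicator $\eta(\mathbf i)=\mathbf 1\{B^{0}_{\mathbf i}\cap\widehat{\mathcal A}\neq\emptyset\}$ is i.i.d.\ over boxes, and the microscopic path is projected to a \emph{loop-erased} box-level path $\overline\psi$, so each box is counted once no matter how much the original path wiggles inside it. The comparison $\overline\phi(\overline\psi)\le(2+2M^{2d+3})\phi(\psi)$ (via distinct representatives $x_{t_j}$, one per visited good box) plays the role of your per-vertex bound, and with $K=\lceil 20M(2+2M^{2d+3})\rceil$ the event forces a coarse path of length $r\ge \lfloor Kn/M\rfloor$ with at most $r/2$ good boxes. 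Now the entropy is only $(2d)^{r}$ \emph{per box}, while the per-box failure probability $(1-p^{2d+1})^{\lfloor (M-2)/3\rfloor^{d}}$ can be made smaller than $1/(64d^{2})$ by taking $M$ large (this is where enlarging the scale genuinely helps, unlike in your scheme), and a binomial large-deviation estimate then gives the summable bound $2^{-r}$. Your closing discussion of Bernoulli domination is not part of this lemma (it is stated for $\mathbb P^{p}$ and the transfer to $\zeta^{\varepsilon}$ happens later in the proof of Theorem~\ref{Thm_main}), but your final resolution — use $\zeta^{\varepsilon}\succ\mathbb P^{\rho_-}$ and the fact that the event is decreasing in $\mathcal A$ by \eqref{monoton1}, hence run the estimate with $p=\rho_-$ — is the correct one and matches the paper.
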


\begin{proof}
The equality in \eqref {eq:equality} holds by the definition of $C_{n}$. Let us prove the inequality on the right-hand side of the above formula.

For $M\in\mathbb{N}$ we subdivide $\mathbb{Z}^{d}$ in boxes $B_{\mathbf{i}%
},\ \mathbf{i}\in\mathbb{Z}^{d},$ of side-length $M:$%
\[
B_{\mathbf{i}}:=\left(  \left[  \left(  i_{1}-1\right)  M+1,\,i_{1}M\right]
\times\cdots\times\left[  \left(  i_{d}-1\right)  M+1,\,i_{d}M\right]
\right)  \cap\mathbb{Z}^{d},
\]
and%
\[
B_{\mathbf{i}}^{0}:=\left(  \left[  \left(  i_{1}-1\right)  M+2,\,i_{1}%
M-1\right]  \times\cdots\times\left[  \left(  i_{d}-1\right)  M+2,\,i_{d}%
M-1\right]  \right)  \cap\mathbb{Z}^{d},
\]
which is a box contained in $B_{\mathbf{i}}.$ We define%
\[
\eta\left(  \mathbf{i}\right)  =\left\{
\begin{array}
[c]{cc}%
1 & \mathrm{if\ }B_{\mathbf{i}}^{0}\cap\widehat{\mathcal{A}}\neq\emptyset\\
0 & \mathrm{if\ }B_{\mathbf{i}}^{0}\cap\widehat{\mathcal{A}}=\emptyset
\end{array}
\right.  .
\]
The $\eta\left(  \mathbf{i}\right)  $ are i.i.d. In order to estimate
$\mathbb{P}\left(  \eta\left(  \mathbf{i}\right)  =0\right)  $, we subdivide
the box $B_{\mathbf{i}}^{0}$ into boxes $Q_{j}$ of side-length $3$, with
possibly some small part remaining close to the boundary of $B_{\mathbf{i}%
}^{0}$. As the $B_{\mathbf{i}}^{0}$ have side-length $M-2,$ we can place
$\left\lfloor \left(  M-2\right)  /3\right\rfloor ^{d}$ of the $Q$-boxes
without overlaps into $B_{\mathbf{i}}^{0}.$ For a $Q$-box, the probability
that the middle point and all its neighbors belong to $\mathcal{A}$ is
$p^{2d+1}.$ Therefore%
\[
\mathbb{P}\left(  \eta\left(  \mathbf{i}\right)  =0\right)  \leq\left(
1-p^{2d+1}\right)  ^{\left\lfloor \frac{M-2}{3}\right\rfloor ^{d}}.
\]
We choose $M=M\left(  p,d\right)  $ such that%
\begin{equation}
\mathbb{P}\left(  \eta\left(  \mathbf{i}\right)  =0\right)  \leq\frac
{1}{64d^{2}}. \label{fixing_M}%
\end{equation}
For $x\in\mathbb{Z}^{d},$ we write $\mathbf{i}\left(  x\right)  $ for the
index $\mathbf{i}$ such that $x\in B_{\mathbf{i}}$. Remark that $\mathbf{i}%
\left(  0\right)  =0.$ Remark that $M$ depends on $d$ and $p$ only.

Given any self-avoiding nearest-neighbor path connecting $x$ with $0,$ that
is,
\[
\psi=\left(  x_{0}=x,x_{1},\ldots,x_{k}=0\right)
\]
we attach to it a renormalized nearest neighbor path $\overline{\psi}=\left(
\mathbf{i}\left(  x\right)  ,\mathbf{i}_{1},\ldots,\mathbf{i}_{\ell}=0\right)
$ for some $\ell\le k$ in the following way. $\psi$ starts at $x$ which is inside a box
$B_{\mathbf{i}\left(  x\right)  }$. Put $\mathbf{i}_{0}:=\mathbf{i}\left(
x\right)  $. When $\psi$ for the first time leaves $B_{\mathbf{i}_{0}},$ it
enters a box $B_{\mathbf{i}_{1}}$ with $\mathbf{i}_{1}$ being a neighbor of
$\mathbf{i}_{0}$ in $\mathbb{Z}^{d}.$ Then we wait for the next time in which
$\psi$ leaves $B_{\mathbf{i}_{1}}$ and enters a neighbor box $B_{\mathbf{i}%
_{2}}$. This path is not yet self-avoiding, but we can make it so by erasing
successively all the loops. See Figure~\ref{fig:path} for an example.
\begin{figure}[ht!]
\includegraphics[scale=1]{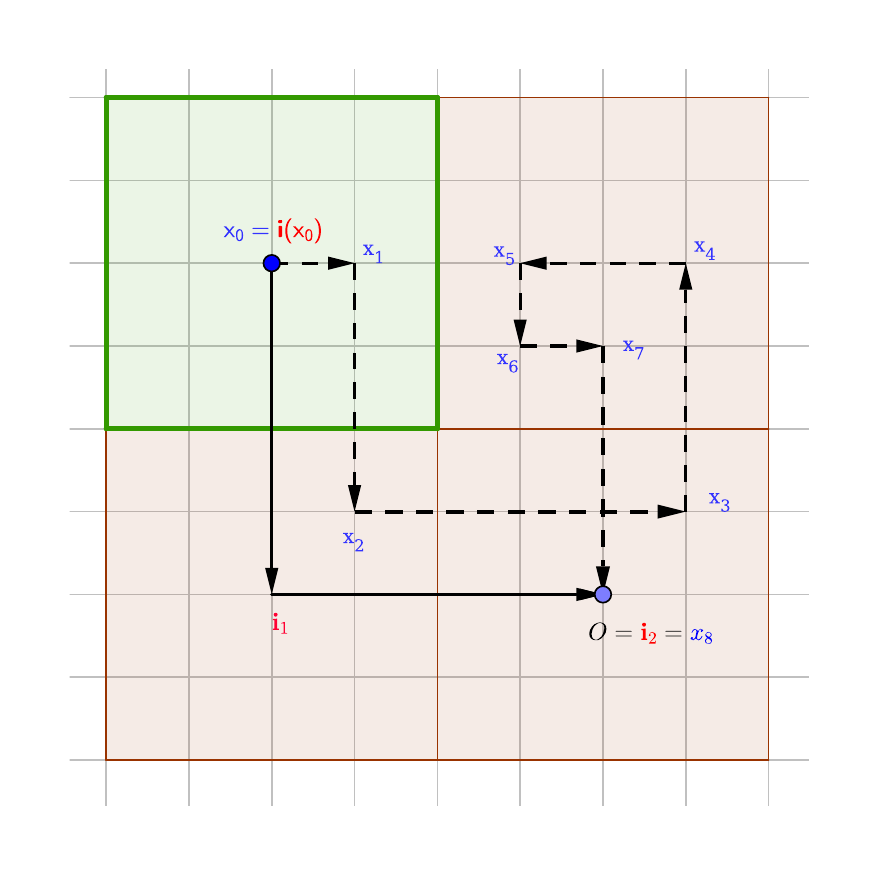}
\caption{The lattice boxes have side length one, while the renormalised $M$-boxes have side length $4$ (one is highlighted in green). The path $\psi=(x_0,\,\ldots,\,x_8)$ is dashed while $\bar\psi=(\mathbf i(x_0),\,\mathbf i_1\,\mathbf i_2)$ is solid.}
\label{fig:path}
\end{figure}

In this way, we proceed and obtain a path from $\mathbf{i}_{0}$ to $0$, which
we indicate as $\mathbf{i}_{0}\rightarrow\mathbf{0}$, of the form $\left(
\mathbf{i}_{0},\mathbf{i}_{1},\ldots,\mathbf{i}_{\ell}=0\right)  .$ Evidently,
we can define an injective mapping $\left\{  0,\ldots,\ell\right\}  \ni
j\mapsto t_{j}\in\left\{  0,\ldots,k\right\}  $ with $x_{t_{j}}\in
B_{\mathbf{i}_{j}}$ (for example letting $t_j$ be the first entrance time of $\psi$ in the box $B_{\mathbf{i}_{j}}$). As this mapping is
injective and the path $\psi$ is self-avoiding, the $x_{t_{j}}$'s are different.

We attach to $\overline{\psi}$ a weight
\[
\overline{\phi}\left(  \overline{\psi}\right)  =\left|  \left\{  j\leq
\ell:\eta\left(  \mathbf{i}_{j}\right)  =1\right\}  \right|
\]
that counts the number of large boxes in which a pinned point lies. From the
construction one obtains that
\[
\overline{\phi}\left(  \overline{\psi}\right)  =\sum_{j=0}^{\ell}%
\mathbf{1}_{\left\{  \eta\left(  \mathbf{i}_{j}\right)  =1 \right\}  }
\leq\sum_{j=0}^{\ell}\mathbf{1}_{\left\{  \eta\left(  \mathbf{i}_{j}\right)
=1 \right\}  } \frac{2+2M^{2d+3}}{2+2\mathrm{d}\left(  x_{t_{j}},\widehat{{A}%
}\right)  ^{2d+3}}%
\]
as $\mathrm{d}\left(  x,\widehat{{A}}\right)  \leq M$ whenever $x\in
B_{\mathbf{i}}$ with $B_{\mathbf{i}}^{0}\cap\widehat{{A}}\neq\emptyset.$
Moreover, recalling \eqref{Def_path_weight}, we can say that
\[
\sum_{j=0}^{\ell}\frac{\mathbf{1}_{\left\{  \eta\left(  \mathbf{i}_{j}\right)
=1 \right\}  } }{2+2\mathrm{d}\left(  x_{t_{j}},\widehat{{A}}\right)  ^{2d+3}%
}\leq\sum_{i=0}^{k}\frac{1}{2+2\mathrm{d}\left(  x_{i},\widehat{{A}}\right)
^{2d+3}}\leq\phi\left(  \psi\right)
\]
and so%
\begin{equation}
\overline{\phi}\left(  \overline{\psi}\right)  \leq\left(  2+2M^{2d+3}\right)
\phi\left(  \psi\right)  . \label{bound_phibar}%
\end{equation}

We have already fixed $M$ above (depending only on $d$ and $p$), and we choose
now $K$ as
\begin{equation}
K:=\left\lceil 20M\left(  2+2M^{2d+3}\right)  \right\rceil . \label{fixing_K}%
\end{equation}

If there exists $x$ with $\mathrm{d}\left(  0,\,x\right)  >Kn$ and
$\widehat{\mathrm{d}}\left(  0,x\right)  \leq10n,$ then there exists a $\psi$
from $x$ to $0$ with $\phi\left(  \psi\right)  \leq10n$, implying by means of
\eqref{bound_phibar} that there exists a path $\overline{\psi}$ from
$\mathbf{i}\left(  x\right)  $ to $0$ with weight%
\[
\overline{\phi}\left(  \overline{\psi}\right)  \leq10\left(  2+2M^{2d+3}%
\right)  n
\]
and%
\[
\mathrm{d}\left(  0,\,\mathbf{i}\right)  >\frac{Kn}{M}.
\]
Setting%
\[
m:=\left\lfloor \frac{Kn}{M}\right\rfloor ,
\]
we see that by our choice (\ref{fixing_K})%
\begin{equation}
\bigcup\limits_{x:\,\mathrm{d}\left(  0,x\right)  >Kn}\left\{  \widehat
{\mathrm{d}}\left(  0,x\right)  \leq10n\right\}  \subset\bigcup
\limits_{\mathbf{i}:\,\mathrm{d}\left(  0,\mathbf{i}\right)  >m}%
\bigcup\limits_{\overline{\psi}:\,\mathbf{i\rightarrow0}}\left\{
\overline{\phi}\left(  \overline{\psi}\right)  \leq\frac{m}{2}\right\}.
\label{Inklusion1}%
\end{equation}
Fix $\mathbf{i}$ with $\mathrm{d}\left(  0,\mathbf{i}\right)  =:l>m$. A path
$\overline{\psi}=\left(  \mathbf{i}_{0}=\mathbf{i,\mathbf{i}}_{1}%
,\ldots,\mathbf{i}_{r}=0\right)  $ has length $r:=\left\vert \overline{\psi
}\right\vert \geq l$, hence
\[
\left\{  \overline{\phi}\left(  \overline{\psi}\right)  \leq\frac{m}%
{2}\right\}  \subset\left\{  \overline{\phi}\left(  \overline{\psi}\right)
\leq\frac{\left\vert \overline{\psi}\right\vert }{2}\right\}  .
\]

The number of paths of length $r\geq l$ on the lattice is bounded by $\left(
2d\right)  ^{r}.$ For every such path $\overline{\psi}$ the $\eta\left(
\mathbf{i}_{j}\right)  $ are i.i.d. with success probability (cf.
\eqref{fixing_M})
\[
\mathbb{P}\left(  \eta\left(  \mathbf{i}\right)  =1\right)  =:\tau\geq
1-\frac{1}{64d^{2}}>\frac{1}{2},
\]
and therefore
\[
\mathbb{P}\left(  \overline{\phi}\left(  \overline{\psi}\right)  \leq\frac
{m}{2}\right)  =\mathbb{P}\left(  \left\{  \overline{\phi}\left(
\overline{\psi}\right)  \leq\frac{m}{2}\right\}  \cap\left\{  \widehat
{\mathcal{A}}\neq\emptyset\right\}  \right)  .
\]
Therefore, for a fixed $\overline{\psi},$ the right-hand side above is bounded
by the probability that a Bernoulli sequence of length $r$ with success
probability $1-\tau$ has at least $r/2$ successes. This probability is bounded
above by (see \cite{ArratiaGordon})
\[
\exp\left[  -rI\left(  \left.  \frac{1}{2}\right\vert 1-\tau\right)  \right]
,
\]
where for $p_{1},p_{2}\in\left(  0,1\right)  $ one defines
\[
I\left(  \left.  p_{1}\right\vert p_{2}\right)  :=p_{1}\log\frac{p_{1}}{p_{2}%
}+\left(  1-p_{1}\right)  \log\frac{1-p_{1}}{1-p_{2}}.
\]
Hence%
\begin{align*}
&  \mathbb{P}\left(  \left\{  \overline{\phi}\left(  \overline{\psi}\right)
\leq\frac{m}{2}\right\}  \cap\left\{  \widehat{\mathcal{A}}\neq\emptyset
\right\}  \right)  \leq\exp\left[  -rI\left(  \left.  \frac{1}{2}\right\vert
1-\tau\right)  \right] \\
&  =\exp\left[  -\frac{r}{2}\left(  \log\frac{1}{2\left(  1-\tau\right)
}+\log\frac{1}{2\tau}\right)  \right]  \leq\left(  2\left(  1-\tau\right)
\right)  ^{r/2}\left(  2\tau\right)  ^{r/2}\leq\left(  4\left(  1-\tau\right)
\right)  ^{r/2}.
\end{align*}
Therefore, for $l>m$,%
\begin{align*}
&  \mathbb{P}\left(  \bigcup\limits_{\overline{\psi}:\mathbf{i\rightarrow0}%
}\left\{  \overline{\phi}\left(  \overline{\psi}\right)  \leq\frac{m}%
{2}\right\}  \cap\left\{  \widehat{\mathcal{A}}\neq\emptyset\right\}  \right)
\leq\sum_{r=l}^{\infty}\left(  2d\right)  ^{r}\left(  4\left(  1-\tau\right)
\right)  ^{r/2}\leq\sum_{r\ge l}\left(  2d\right)  ^{r}\left(  4\frac
{1}{64d^{2}}\right)  ^{r/2}\\
&  =\sum_{r\ge l}\frac{1}{2^{r}}=\frac{1}{2^{l-1}},
\end{align*}
and%
\begin{align*}
\mathbb{P}\left(  \bigcup\limits_{\mathbf{i}:\mathrm{d}\left(  0,\mathbf{i}%
\right)  >m}\bigcup\limits_{\overline{\psi}:\mathbf{i\rightarrow0}}\left\{
\overline{\phi}\left(  \overline{\psi}\right)  \leq\frac{m}{2}\right\}
\cap\left\{  \widehat{\mathcal{A}}\neq\emptyset\right\}  \right)   &  \leq
\sum_{l\ge m+1}\frac{\left\vert \left\{  \mathbf{i}:\mathrm{d}\left(
\mathbf{i},0\right)  =l\right\}  \right\vert }{2^{l-1}}\\
&  \leq c_{5}\left(  d\right)  \sum_{l\ge m+1}\frac{l^{d-1}}{2^{l-1}%
}\leq\left(  \frac{2}{3}\right)  ^{m}%
\end{align*}
for large enough $m.$ Together with (\ref{Inklusion1}), this gives%
\[
\mathbb{P}\left(  \bigcup\limits_{x:\mathrm{d}\left(  0,x\right)  >Kn}\left\{
\widehat{\mathrm{d}}\left(  0,x\right)  \leq10n\right\}  \right)  \leq\left(
\frac{2}{3}\right)  ^{\frac{Kn}{M}}.
\]
This concludes the proof of the lemma choosing $\lambda=\lambda
(p,\,d):=-\left(  K/M\right)  \log(2/3).$
\end{proof}

\subsection{Proof of Theorem \ref{Thm_main}}

We assume now $d\geq5.$ Let $x,\,y\in\mathbb{Z}^{d}$. We have to estimate
$E_{N}^{\varepsilon}\left[  \varphi_{x}\varphi_{y}\right]  $. We may assume
that $x,\,y\in V_{N}$, as otherwise the expression is $0$. It is convenient to
shift everything by $x$:%
\[
E_{N}^{\varepsilon}\left[  \varphi_{x}\varphi_{y}\right]  =E_{V_{N}%
+x}^{\varepsilon}\left[  \varphi_{0}\varphi_{y-x}\right]  =E_{\zeta_{V_{N}%
+x}^{\varepsilon}}\left(  G^{\mathcal{A}}_{V_{N}+x}(0,y-x)\mathbf{1}_{\left\{
0\notin\mathcal{A}\right\}  }\right)
\]
where $\mathcal{A}\subset V_{N}+x$ is distributed according to $\zeta
_{V_{N}+x}^{\varepsilon}$. Substituting $z:=y-x$, we see that we have to
estimate%
\[
\left\vert E_{\zeta_{V_{N}+x}^{\varepsilon}}\left(  G^{\mathcal{A}}_{V_{N}+x}(0,\,z)\mathbf{1}_{\left\{  0\notin\mathcal{A}\right\}  }\right)  \right\vert
\leq E_{\zeta_{V_{N}+x}^{\varepsilon}}\left(  \left\vert G_{\mathcal{A}%
,\,V_{N}+x}(0,\,z)\right\vert \mathbf{1}_{\left\{  0\notin\mathcal{A}\right\}
}\right)  .
\]
Let $\overline{\mathcal{A}}:=\mathcal{A}\cup\left(  V_{N}+x\right)^{\c}.$ For a
fixed realization of $\mathcal{A}$ with $0\notin\mathcal{A},$ $G^{\mathcal{A}}_{V_{N}+x}(0,\cdot)$ is $h_{\overline{\mathcal{A}}}$ restricted to $V_{N}+x.$
Outside this set, $h_{\overline{\mathcal{A}}}$ is of course $0.$

By Proposition \ref{prop:dom}, the distribution of $\overline{\mathcal{A}}$
under $\zeta_{V_{N}+x}^{\varepsilon}$ strongly dominates the Bernoulli law
$\mathbb{P}^{\rho_{-}}$ where $\rho_{-}=\rho_{-}\left(  d,\varepsilon\right)
$ is defined by (\ref{Def_rho}). The Bernoulli domination is proved there
only for the configuration inside $V_{N}+x,$ but as $\overline{\mathcal{A}}$
contains all the points outside $V_{N}+x$, the domination trivially extends to
the measures on $\mathcal{P}\left(  \mathbb{Z}^{d}\right)  $.

Let $K=K\left(  d,\,\varepsilon\right)  $ be as defined in Lemma \ref{Le_main}
with $p$ there equal to $\rho_{-}$. Set%
\[
R_{n}:=\left\{  x\in\mathbb{Z}^{d}:Kn\leq\mathrm{d}\left(  0,x\right)
<K\left(  n+1\right)  \right\}  .
\]
We want to show that we can choose $\delta>0$, depending on $d,\,\varepsilon$ only,
such that%
\begin{equation}
\sup_{N,\,x}{\zeta_{V_{N}+x}^{\varepsilon}}\left(  \sup_{z\in R_{n}}\left\vert
G^{\mathcal{A}}_{V_{N}+x}(0,z)\right\vert \geq\mathrm{e}^{-\delta n}\right)
\leq L\left(  \varepsilon\right)  \mathrm{e}^{-\delta n}. \label{expon_est}%
\end{equation}
Having proved this, Theorem \ref{Thm_main} follows, as $\sup
_{z,\,x,\,N,\,A}\left\vert G^{\mathcal{A}}_{V_{N}+x}(0,z)\right\vert \leq G(0,\,0)<+\infty$ for
$d\geq5$ (cf. \eqref{eq:GtoGamma}) and therefore, if $z\in R_{n}$ for some $n$, by the law of total probability we get
\begin{align*}
\sup_{N,\,x}&\left\vert E_{N}^{\varepsilon}\left[  \varphi_{x}\varphi
_{x+z}\right]  \right\vert \leq\sup_{N,\,x}E_{\zeta_{V_{N}+x}^{\varepsilon}%
}\left(  \left\vert G^{\mathcal{A}}_{V_{N}+x}(0,z)\right\vert \mathbf{1}%
_{\left\{  0\notin\mathcal{A}\right\}  }\right)  \leq L\left(  \varepsilon
\right)  \mathrm{e}^{-\delta n}.
\end{align*}

In order to prove (\ref{expon_est}), set%
\begin{align*}
X_{n}  &  :=\sup_{z\in R_{n}}\left\vert G^{\mathcal{A}}_{V_{N}+x}%
(0,z)\right\vert ,\\
Y_{n}  &  :=\left\Vert G^{\mathcal{A}}_{V_{N}+x}(0,\cdot)\right\Vert
_{\overline{\mathcal{A}},R_{n}},\\
\xi_{n}  &  :=\sqrt{1+\sup\nolimits_{x\in R_{n}}\mathrm{d}(x,\overline
{\mathcal{A}})^{2d+3}}.
\end{align*}
Then%
\eq{}\label{eq:two_vars}
X_{n}\leq\sqrt{\sum_{z\in R_{n}}\left(  G^{\mathcal{A}}_{V_{N}+x}(0,z)\right)
^{2}}\leq\xi_{n}Y_{n}.
\eeq{}
To prove (\ref{expon_est}), we observe that for any $\delta'>0$ and $n\ge n_0(\delta')$ 
\begin{align}
\sup_{N,\,x}\,&{\zeta_{V_{N}+x}^{\varepsilon}}\left(  \xi_n Y_n \geq\mathrm{e}^{-\delta' n}\right)=\sup_{N,\,x}{\zeta_{V_{N}+x}^{\varepsilon}}\left(  \xi_n Y_n \geq\mathrm{e}^{-\delta' n},\,\xi_n< n^{2(d+2)}\right)\nonumber\\
&+\sup_{N,\,x}{\zeta_{V_{N}+x}^{\varepsilon}}\left(  \xi_n Y_n \geq\mathrm{e}^{-\delta' n},\,\xi_n\ge n^{2(d+2)}\right)\nonumber\\
&\le \sup_{N,x}\zeta_{V_{N}+x}^{\varepsilon}\left(  Y_{n}\geq\mathrm{e}%
^{-2\delta' n}\right)+\sup_{N,x}\zeta_{V_{N}+x}^{\varepsilon}\left(  \xi_{n}\geq  n^{2\left(
d+2\right)  }\right).\label{eq:another_two_sum}
\end{align}
Now define $2\delta':=\delta$ where $\delta$ appears in Corollary~\ref{Cor_main}. Notice that
\begin{align}
\zeta_{V_{N}+x}^{\varepsilon}\left(  Y_{n}\geq\mathrm{e}%
^{-\delta n}\right)&=\zeta_{V_{N}+x}^{\varepsilon}\left(  Y_{n}\geq\mathrm{e}%
^{-\delta n},\,R_n\subset C_n^\c\right)+\zeta_{V_{N}+x}^{\varepsilon}\left(  Y_{n}\geq\mathrm{e}%
^{-\delta n},\,R_n\cap C_n\neq\emptyset\right)\nonumber\\
&\le \zeta_{V_{N}+x}^{\varepsilon}\left(  \overline{\mathcal A}=\emptyset\right)+\zeta_{V_{N}+x}^{\varepsilon}\left(  \inf\nolimits_{x:\mathrm{d}\left(
x,0\right)  >Kn}\widehat{\mathrm{d}}_{\overline{\mathcal{A}}}\left(
0,x\right)  \leq{10}n\right).\label{eq:first_term}
\end{align}
In the last inequality we have used Corollary~\ref{Cor_main}. By means of the monotonicity property (\ref{monoton1}) and Bernoulli domination, the right-hand side above is
dominated by%
\[
\mathbb{P}^{\rho^{-}}\left(\overline{\mathcal A}=\emptyset  \right)+ \mathbb{P}^{\rho^{-}}\left(  \inf\nolimits_{x:\mathrm{d}\left(  x,0\right)
>Kn}\widehat{\mathrm{d}}_{\overline{\mathcal{A}}}\left(  0,x\right)  \leq
{10}n\right)  ,
\]
where $\rho^{-}:=\rho^{-}\left(  d,\varepsilon\right)$. With $\lambda$ as of Lemma~\ref{Le_main} we can find $n=n(\lambda)$ large enough such that $\mathbb{P}^{\rho^{-}}\left(\overline{\mathcal A}=\emptyset  \right)\le \exp(-\lambda n)$ applying Lemma~\ref{Le_box_empty}. We plug the result of Lemma~\ref{Le_main} in \eqref{eq:first_term} to get
\eq{}\label{eq:term_first}
\zeta_{V_{N}+x}^{\varepsilon}\left(  Y_{n}\geq\mathrm{e}%
^{-\delta n}\right)\le \e^{-\lambda n}.
\eeq{}
We now look at the second summand of \eqref{eq:another_two_sum}. For large enough $n$ (depending on $d,\,\varepsilon$ only)%
\[
\left\{  \xi_{n}\geq  n^{2\left(  d+2\right)  }\right\}  \subset\left\{\sup_{x\in R_n}
\mathrm{d}(x,\overline{\mathcal{A}})>n^{2}\right\}  .
\]
Using the monotonicity property (\ref{monoton1}), one has%
\[
\sup_{N,x}\zeta_{V_{N}+x}^{\varepsilon}\left(  \sup_{x\in R_n}\mathrm{d}(x,\overline
{\mathcal{A}})>n^{2}\right)  \leq\mathbb{P}^{\rho^{-}}\left(\sup_{x\in R_n}  \mathrm{d}%
(x,\mathcal{A})>n^{2}\right)
\]
which evidently is of order $\exp\left[  -L\times
n^{2}\right]  $ for large $n$. This, \eqref{eq:term_first}, \eqref{eq:first_term} and \eqref{eq:two_vars} prove \eqref{expon_est}.
\bibliographystyle{abbrvnat}
\bibliography{biblio}
\end{document}